 \newtheorem{thm}{Theorem}[section]
 \newtheorem{lemma}[thm]{Lemma}
 \newtheorem{Proposition}[thm]{Proposition}
 \theoremstyle{definition}
 \theoremstyle{remark}
 \newtheorem{example}{Example}
 \numberwithin{equation}{section}
 \newcommand{\R}{\mathbb{R}}
   \newcommand{\U}{\mathcal{U}}
    \newcommand{\V}{\mathcal{V}}
\begin{document}

%
%

\title[Iteration of Involutes of Constant Width Curves in the Minkowski Plane]
 {Iteration of Involutes of Constant Width Curves in the Minkowski Plane}

\author[M.Craizer]{Marcos Craizer}

\address{%
Departamento de Matem\'{a}tica- PUC-Rio\br
Rio de Janeiro\br
Brazil}

\email{craizer@puc-rio.br}

\thanks{The author want to thank CNPq for financial support during the preparation of this manuscript.}
\subjclass{ 53A15, 53A40}

\keywords{ equidistants, area evolute, center symmetry set}

\date{September 30, 2013}

\begin{abstract}
In this paper we study properties of the area evolute (AE) and the center symmetry set (CSS) of a convex planar curve $\gamma$.
The main tool is to define a Minkowski plane where $\gamma$ becomes a constant width curve. In this Minkowski plane, the CSS
is the evolute of $\gamma$ and the AE is an involute of the CSS. We prove that the AE is contained in the region bounded by the CSS and 
has smaller signed area.

The iteration of involutes generate a pair of sequences of constant width curves with respect to the Minkowski metric and its dual, respectively.
We prove that these sequences are converging to symmetric curves with the same center, which can be regarded as a central point of the curve  $\gamma$. 
\end{abstract}

\maketitle

\section{Introduction}

Consider a smooth convex curve $\gamma$ in the plane, boundary of a strictly convex set $\Gamma$.  We call a diameter
any chord connecting points of $\gamma$ whose tangents are parallel. The set  of midpoints of the diameters
is called {\it area evolute} (AE), while the envelope of these diameters is called the {\it center symmetry set} (CSS). 
These two set sets describe the symmetries of $\gamma$ and have been extensively studied (\cite{Giblin08},\cite{Holtom99},\cite{Giblin-Zakalyukin},\cite{Janeczko96}). For a discrete version
of the AE and CSS, see \cite{Craizer13}.

A Minkowski plane is a $2$-dimensional vector space with a norm. This norm can be characterized by its unit ball $\U$, which is a convex symmetric set. We shall assume that its
boundary curve $u$ is smooth with strictly positive euclidean curvature. 
The $u$-evolute ${N_0}$ of a curve $\gamma$ is the envelope of the $u$-normal lines.  Any curve whose $u$-evolute is ${N_0}$ is called an $u$-involute of ${N_0}$.
One can easily verify that there is a one parameter family $\gamma_c$ of $u$-involutes of ${N_0}$, the $u$-equidistants of $\gamma$. 
One can find several properties of Minkowski evolutes in \cite{Tabach97}.  For some applications of Minkowski evolutes and equidistants in computer graphics, see \cite{Ait-Haddou00}. 

There is a particular choice of Minkowski metric $u=u(\gamma)$ that makes $\gamma$ a constant $u$-width curve. Take just $\U$ to 
be the central symmetrization of $\Gamma$, namely, $\U=\frac{1}{2}(\Gamma)+(-\Gamma))$ (see \cite{Chakerian66}). In fact, we can choose any unit ball homothetic to $\U$. 
In this Minkowski metric, the evolute ${N_0}$ coincides with the CSS and we shall write $N_0=CSS(\gamma)$. 
Moreover, the area evolute $M$ of $\gamma$ is an $u$-equidistant of $\gamma$ and thus we shall write $M={\mathcal Inv}(N_0)$. 

In Minkowski theory, the dual norm $\V$ plays an important r\^ole. We shall be interested in the one parameter family of $v$-involutes of $M$, where $v$ denotes the boundary of $\V$.
We obtain a family $\delta_d$ of
constant $v$-width curves whose CSS is $M$. The AE of $\delta_d$ is independent of the choice of $d$ and we shall denote it by $N={\mathcal Inv}(M)$.  

Denote by $\alpha$ the $u$-curvature of $M$ and by $\beta$ the $v$-curvature of $N$. It is an interesting fact that the derivative
of $\beta$ with respect to $v$-arc length of $u$ is exactly $\alpha$. Moreover, $\beta$ can be interpreted as follows: A diameter $l(\theta)$ divides the region 
bounded by an equidistants $\gamma_c$ into two parts of area $A_1(c,\theta)$ and $A_2(c,\theta)$. Then the area difference $A_1(c,\theta)-A_2(c,\theta)$ grows linearly 
with $c$ and the rate of growth is exactly $4\beta(\theta)$. 

We can interpret the sup norm $||\beta||_{\infty}$ of $\beta$ as an asymmetry measure of $\gamma$ equivalent to the one proposed in \cite{Groemer}. 
In fact, $\gamma=\gamma_{c_0}$, for some $c_0$, and  the asymmetry function of $\gamma$ defined in \cite{Groemer} is the maximum of the ratios $A_1(c_0,\theta)/A_2(c_0,\theta)$. This is equivalent to consider the asymmetry measure of $\gamma$ as the maximum of the differences $A_1(c_0,\theta)-A_2(c_0,\theta)$, which is 
$4c_0||\beta||_{\infty}$.

Based on the area difference property, we show that $N$ is contained in region $\overline{M}$ bounded by $M$. This fact can be re-phrased by saying that the AE of a convex curve is contained
in the region bounded by its CSS. Although this is not surprising, we are not aware of any published proof.

Since the curves $M$ and $N$ may have self-intersections, it is not easy to compare the areas bounded by them. Thus we consider a substitute for these areas, the "signed areas".
Using some Minkowski isoperimetric inequalities, it is not difficult to show that the mixed areas $A(M,M)$ and $A(N,N)$ are negative, being zero if and only if $M$ and $N$ reduce to points.
Thus we define the signed areas $SA(M)=-A(M,M)$ and $SA(N)=-A(N,N)$. With this definition, we can prove that the difference $SA(M)-SA(N)$ is exactly 
the integral of the square of the $v$-curvature $\beta$ of $N$ with respect to $u$-arc length of $v$.  

It seems natural to iterate the involutes. We obtain a pair of sequences $(M_i)$ and $(N_i)$ of curves defined indutively by $M_0=M$, $N_{i}={\mathcal Inv}(M_{i-1})$ and $M_{i}={\mathcal Inv}(N_i)$. We have that 
$\overline{M_{i-1}}\supset\overline{N_i}\supset\overline{M_i}$, the curvature
of $M_{i-1}$ is the derivative of the curvature $N_i$ and the curvature of $N_i$ is the derivative of the curvature of $M_{i}$. A similar iteration for fronts can be found in
\cite{Takahashi12}.
We shall prove here that the curves $N_i$ and $M_i$ are converging to a constant curve $O$ in the $C^{\infty}$ topology. The point $O=O(\gamma)$ can be regarded as a center of symmetry 
of $\gamma$ and thus we shall call it the central point of $\gamma$.

These iterates can also be seen in terms of the equidistants. Consider the sequences of convex curves $\gamma_i$, $c$-equidistants of $M_i$, and $\delta_i$, $d$-equidistants
of $N_i$, with $c$ and $d$ fixed. Then these curves are of constant width in the Minkowski planes  $\U$ and $\V$, respectively. Moreover,  $\gamma_i$ and $\delta_i$ are converging 
in the $C^{\infty}$ topology to $O+cu$ and $O+dv$.

The paper is organized as follows: In section 2, we review concepts of Minkowski planar geometry, like arc-length, curvature, evolutes and involutes. In section 3 
we define the Minkowski metric such that $\gamma$ becomes of constant width. Then we describe the concepts of section 2 in the particular case of constant width curves. In section 4 we prove two results
comparing the AE and CSS of a curve. Finally in section 5 we prove the convergence of the iteration of involutes to a constant.

\section{Curves in a Minkowski plane}

In this section, we describe the basic definitions and properties of a Minkowski norm in the plane. For details, see \cite{Thompson96}.

We denote by $[w_1,w_2]$ the determinant of the $2\times 2$ matrix whose columns are $w_1$ and $w_2$. 
Along the paper, unless otherwise stated, symmetry will always mean symmetry with respect to the origin.

\subsection{Minkowski plane and its dual}

Consider a convex symmetric set $\U\subset\R^2$ . For any $X\in\R^2$, write $X=tu$, for some $t\geq 0$ and $u$ in the boundary $\U$. Then $||X||_{u}=t$
is a Minkowski norm in the plane. We shall assume that $\U$ is strictly convex and its boundary $u$ is a smooth curve.

Denoting $e_r=(\cos(\theta),\sin(\theta))$ and $e_{\theta}=(-\sin(\theta),\cos(\theta))$, parameterize $u$ by $u(\theta)$, $0\leq\theta\leq 2\pi$, such that 
$u'(\theta)$ is a non-negative multiple of $e_{\theta}$. We can write
$$
u(\theta)=a(\theta)e_r+a'(\theta)e_{\theta},
$$
where $a(\theta)$ is the support function of $\U$. We shall assume that $(a+a'')(\theta)>0$,  for any $0\leq\theta\leq 2\pi$, which is equivalent to say that the curvature 
of $u$ is strictly positive.

The dual unit ball $\U^*$ can be identified with a convex set $\V$ in the plane by  $u^*(w)=[w,v]$, for any $w\in\R^2$. 
Define
\begin{equation}\label{eq:definev}
v(\theta)=\frac{u'(\theta)}{[u(\theta),u'(\theta)]}.
\end{equation}
Since $[u,v]=1$ and $[u',v]=0$, $v(\theta)$ is a parameterization of the boundary of $\V$. 
It is not difficult to verify that $v$ is a convex symmetric curve with strictly positive curvature. Moreover, 
\begin{equation}\label{eq:dual}
u(\theta)=-\frac{v'(\theta)}{[v(\theta),v'(\theta)]}. 
\end{equation}

\subsection{ Minkowski length and curvature}

Given a smooth curve $\gamma$, parameterize it such that $\gamma'(\theta)$ is a non-negative multiple of $e_{\theta}$ and write
\begin{equation}\label{eq:parametergamma}
\gamma'(\theta)=\lambda(\theta)v(\theta), 
\end{equation}
$\lambda(\theta)\geq 0$, $a\leq\theta\leq b$. The Minkowski $v$-length $L_v$  of $\gamma$ is defined as 
$$
L_v(\gamma)=\int_a^b \lambda(\theta) d\theta,
$$ 
(see \cite{Thompson96}). 
The Minkowski normal line at $\gamma(\theta)$ is defined as $\gamma(\theta)+su(\theta)$, $s\in\R$. 
The Minkowski center of curvature $C$ and  the Minkowski curvature radius $R$ of $\gamma$ at $\gamma(\theta)$ are defined by the 
condition that the contact of $C+Ru$ and $\gamma$ at $\gamma(\theta)$ is of order $3$
(\cite{Tabach97}).

\begin{lemma}\label{lemma:curvature}
Consider a curve $\gamma$ satisfying equation \eqref{eq:parametergamma}. 
The Minkowski center of curvature $C$ lies in the Minkowski normal and the Minkowski radius of curvature is ${\mu}(\theta)$, where $\lambda(\theta)={\mu(\theta)}[u,u'](\theta)$. 
\end{lemma}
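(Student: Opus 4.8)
The plan is to verify the claimed contact condition directly by a Taylor expansion. Write the candidate osculating circle in the form $C + R\,u(\theta)$ where $C \in \R^2$ is a fixed point and $R > 0$; the statement to prove is that $C$ lies on the Minkowski normal $\gamma(\theta_0) + s\,u(\theta_0)$ and that $R = \mu(\theta_0)$, where $\lambda = \mu\,[u,u']$, precisely when $C + R\,u$ has contact of order $3$ with $\gamma$ at the parameter value $\theta_0$. I would set up the difference $\phi(\theta) := \gamma(\theta) - \big(C + R\,u(\theta)\big)$ and impose $\phi(\theta_0) = 0$, $\phi'(\theta_0) = 0$, $\phi''(\theta_0) = 0$, since order-$3$ contact means the two curves and their first two derivatives agree at $\theta_0$ (equivalently $\phi$ vanishes to order $3$). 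The vanishing of $\phi(\theta_0)$ already forces $C = \gamma(\theta_0) - R\,u(\theta_0)$, so $C$ is automatically on the Minkowski normal once we know such an $R$ exists; the content is in the next two conditions.

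First I would differentiate. Using $\gamma'(\theta) = \lambda(\theta) v(\theta)$ from \eqref{eq:parametergamma}, the first-order condition $\phi'(\theta_0) = 0$ reads $\lambda(\theta_0) v(\theta_0) = R\,u'(\theta_0)$. Recall from the construction of the dual that $v(\theta)$ is a positive multiple of $u'(\theta)$: indeed $u'(\theta)$ is a non-negative multiple of $e_\theta$, and from \eqref{eq:definev} we have $v(\theta) = u'(\theta)/[u(\theta),u'(\theta)]$, so $u'(\theta) = [u(\theta),u'(\theta)]\,v(\theta) = [u,u'](\theta)\,v(\theta)$. Substituting, the first-order condition becomes $\lambda(\theta_0) v(\theta_0) = R\,[u,u'](\theta_0)\,v(\theta_0)$, i.e. $\lambda(\theta_0) = R\,[u,u'](\theta_0)$, which is exactly $R = \mu(\theta_0)$ by the defining relation $\lambda = \mu\,[u,u']$. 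This is the crux of the identification of the radius, and it is essentially automatic once the $u' = [u,u']\,v$ identity is in hand.

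It remains to check that the second-order condition $\phi''(\theta_0) = 0$ is also satisfied by this same choice of $C$ and $R$ — i.e. that order-$3$ contact is genuinely achieved and not merely order-$2$. Differentiating $\gamma'' = \lambda' v + \lambda v'$ and $(R u)'' = R u''$, and then using the structural relations for $u, u', u''$ in the $\{e_r, e_\theta\}$ frame (we have $u = a e_r + a' e_\theta$, hence $u' = (a+a'') e_\theta$ and $u'' = -(a+a'') e_r + (a+a'')' e_\theta$, so that $u'' = (\log(a+a''))' u' - (a+a'') e_r$), one reduces $\phi''(\theta_0) = 0$ to an identity. The component along $v$ (equivalently along $u' \parallel e_\theta$) gives a relation that is a consequence of $\lambda = \mu [u,u']$ together with the relation already obtained; the component along $u$ (equivalently along $e_r$) is where one must check nothing goes wrong, and here one uses that $v'$ has the correct decomposition — by differentiating \eqref{eq:dual}, or directly, $v'$ is a combination of $u$ and $v$ with computable coefficients. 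I expect this second-order check to be the only genuinely computational step; it is routine but must be carried out carefully in the $\{e_r,e_\theta\}$ coordinates, and it is where the hypothesis $(a+a'')(\theta) > 0$ (so that $[u,u'](\theta) = a(a+a'') - \ldots > 0$, and the frame is non-degenerate) gets used to guarantee $R$ is well-defined and the contact order is exactly as claimed. Once all three conditions hold simultaneously with $C = \gamma(\theta_0) - \mu(\theta_0) u(\theta_0)$ and $R = \mu(\theta_0)$, the lemma follows: $C$ lies on the Minkowski normal and the Minkowski radius of curvature equals $\mu(\theta_0)$.
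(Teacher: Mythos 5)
There is a genuine gap, and it lies in the notion of ``contact of order $3$'' that you are using. You measure contact by requiring the vector-valued difference $\phi(\theta)=\gamma(\theta)-\bigl(C+R\,u(\theta)\bigr)$ to vanish together with $\phi'$ and $\phi''$ at $\theta_0$, i.e.\ you ask the two curves, \emph{in these particular parameterizations}, to agree to second order in $\theta$. That is strictly stronger than geometric contact of order $3$ and is generically impossible to achieve. Concretely: $\phi'=\gamma'-Ru'=(\mu-R)[u,u']\,v$, so your first-order condition is a \emph{speed-matching} condition, not a tangency condition (tangency of $\gamma$ and $C+Ru$ at the common point is automatic, since both velocities are multiples of $e_\theta$, and holds for every $R$); it produces $R=\mu$ for the wrong reason. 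Worse, with $R=\mu(\theta_0)$ one computes, using $u''=[u,u'']v+[u,u']v'$ and $\lambda=\mu[u,u']$,
\begin{equation*}
\phi''(\theta_0)=\bigl(\lambda'-R[u,u'']\bigr)v+\bigl(\lambda-R[u,u']\bigr)v'=\mu'(\theta_0)\,[u,u'](\theta_0)\,v(\theta_0),
\end{equation*}
which is nonzero unless $\mu'(\theta_0)=0$. So the ``routine identity'' you defer to the second-order step is in fact false at a generic point, and the program as described cannot be completed. (This is the same phenomenon as in the Euclidean case: the osculating circle, traced with the same angular parameter as the curve, agrees with the curve to second order in that parameter only at vertices.)

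Two repairs are possible. You can keep the parametric viewpoint but compare $\gamma(\theta)$ with $C+R\,u(\sigma(\theta))$ for an unknown reparameterization $\sigma$ with $\sigma(\theta_0)=\theta_0$, $\sigma'>0$; then the order-$2$ condition only fixes $\sigma'(\theta_0)=\mu/R$, the $v'$-component of the order-$3$ condition gives $R(\sigma')^2[u,u']=\lambda$, hence $R=\mu$, and the $v$-component is absorbed by the free coefficient $\sigma''(\theta_0)$. Alternatively -- and this is what the paper does -- measure contact by the order of vanishing of $f(\theta)=F(\gamma(\theta))$, where $F(X)=\|X-C\|_u-R$ is a defining function of the candidate circle; this is reparameterization-invariant by construction, the condition $f'=0$ correctly encodes tangency (forcing only $\gamma(\theta_0)=C+Ru(\theta_0)$, not a value of $R$), and $f''=0$ is what pins down $R=\mu$ via the identity $R\,D^2F\cdot(u',u')+[u,u']\,DF\cdot v'=0$. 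Your frame computations with $u=ae_r+a'e_\theta$, $u'=(a+a'')e_\theta$ are fine as far as they go, but they feed into the wrong contact condition.
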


\begin{proof}
Let $F(X)=||X-C||_u-R$. Then $F(C+Ru)=0$ and thus $DF(C+Ru)\cdot u'=0$. Differentiating this equation and using $u''=[u,u'']v+[u,u']v'$ we obtain
\begin{equation}\label{eq:ProofCurvature}
RD^2F(C+Ru)\cdot(u',u')+[u,u']DF(C+Ru)\cdot v'=0. 
\end{equation}
Now let $f(\theta)=F(\gamma(\theta))$. Then $f'(\theta)=DF(\gamma(\theta))\cdot \gamma'(\theta)$ and so $\gamma$ has contact of order $2$ with $C+Ru$ if and only if $\gamma(\theta)=C+Ru(\theta)$.
Moreover
$$
f''(\theta)=\mu^2D^2F(\gamma(\theta))\cdot(u'(\theta),u'(\theta))+\mu [u,u'] DF(\gamma(\theta))\cdot v'.
$$
We conclude from equation \eqref{eq:ProofCurvature} that $\gamma$ has contact of order $3$ with $C+Ru$ if and only if $R=\mu$. 
\end{proof}

\subsection{Minkowski evolutes, involutes and equidistants}

Define the $u$-evolute of a curve $\gamma$ as the envelope of its $u$-normals (\cite{Tabach97}). Then lemma \ref{lemma:curvature} implies that
$$
N_0(\theta)=\gamma(\theta)-{\mu}(\theta)u(\theta)
$$

For a fixed $c$, the curves
$$
\gamma_c(\theta)=\gamma(\theta)+cu(\theta)
$$
are called the $u$-equidistants of $\gamma$.

\begin{lemma}\label{lemma:involute}
The equidistants of $\gamma$ have the same evolute as $\gamma$. Reciprocally, if $\gamma_1$ has $N_0$ as its evolute,
then $\gamma_1$ is an equidistant of $\gamma$.
\end{lemma}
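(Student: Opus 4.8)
I want to prove both directions of Lemma~\ref{lemma:involute}. The first direction is a direct computation: if $\gamma_c(\theta)=\gamma(\theta)+cu(\theta)$, then $\gamma_c'(\theta)=\gamma'(\theta)+cu'(\theta)$. Since both $\gamma'$ and $u'$ are non-negative multiples of $e_\theta$, we get $\gamma_c'(\theta)=(\lambda(\theta)+c\,[u,u'](\theta))v(\theta)$, so in the notation of Lemma~\ref{lemma:curvature} the Minkowski radius of curvature of $\gamma_c$ is $\mu_c(\theta)=\mu(\theta)+c$. Hence the $u$-evolute of $\gamma_c$ is $\gamma_c(\theta)-\mu_c(\theta)u(\theta)=\gamma(\theta)+cu(\theta)-(\mu(\theta)+c)u(\theta)=\gamma(\theta)-\mu(\theta)u(\theta)=N_0(\theta)$, which is exactly $N_0$.

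For the converse, suppose $\gamma_1$ is a curve with $u$-evolute equal to $N_0$. Parameterize $\gamma_1$ compatibly, i.e. so that $\gamma_1'(\theta)$ is a non-negative multiple of $e_\theta$; write $\gamma_1'(\theta)=\lambda_1(\theta)v(\theta)$ and let $\mu_1$ be its radius of curvature, so that $N_0(\theta)=\gamma_1(\theta)-\mu_1(\theta)u(\theta)$. Comparing with $N_0(\theta)=\gamma(\theta)-\mu(\theta)u(\theta)$ gives
\begin{equation}\label{eq:InvoluteDiff}
\gamma_1(\theta)-\gamma(\theta)=\bigl(\mu_1(\theta)-\mu(\theta)\bigr)u(\theta).
\end{equation}
Set $c(\theta)=\mu_1(\theta)-\mu(\theta)$; I must show $c$ is constant. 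Differentiating \eqref{eq:InvoluteDiff} and using $\gamma_1'-\gamma'=(\lambda_1-\lambda)v=(\mu_1-\mu)[u,u']v$ (by Lemma~\ref{lemma:curvature}), I get $(\mu_1-\mu)[u,u']v=c'u+c\,u'=c'u+c\,[u,u']v$. Since $[u,v]=1$ and $[u',v]=0$ imply $u$ and $v$ are linearly independent, comparing the $u$-components forces $c'(\theta)=0$, so $c$ is a constant $c$, and then $\gamma_1=\gamma+cu$ is the $u$-equidistant $\gamma_c$.

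**The main obstacle.** There is essentially no obstacle of substance here; the lemma is a bookkeeping statement. The one point requiring a little care is that ``having $N_0$ as evolute'' must be interpreted so that the parameterizations match up — one should fix the compatible parameterization (tangent a non-negative multiple of $e_\theta$) on both $\gamma$ and $\gamma_1$ so that the same $\theta$ labels the point of $\gamma_1$ and its center of curvature on $N_0$. Once that is set, the independence of $u$ and $v$ (equivalently $[u,u']>0$, which holds because the curvature of $u$ is strictly positive) is exactly what lets one split the vector identity into its $u$- and $v$-parts and conclude $c'\equiv 0$. A minor degenerate case worth a remark is when $N_0$ reduces to a point; then $\mu$ and $\mu_1$ are determined only up to an additive constant anyway, and the statement still holds trivially.
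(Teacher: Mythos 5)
Your proof is correct and follows essentially the same route as the paper: the forward direction computes that the curvature radius of $\gamma_c$ is $\mu+c$ so the evolutes coincide, and the converse differentiates $\gamma_1-\gamma=(\mu_1-\mu)u$ and uses the linear independence of $u$ and $v$ (equivalently $[u,u']>0$) to force the coefficient to be constant. You simply spell out the intermediate steps that the paper leaves implicit.
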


\begin{proof}
The evolute of  $\gamma_c(\theta)$ is given by
$$
\gamma(\theta)+cu(\theta)- (\mu(\theta)+c) u(\theta)= N_0(\theta).
$$
Reciprocally, if the evolute of $\gamma_1$ is $N_0$ then
$$
\gamma(\theta)-\gamma_1(\theta)=({ \mu}(\theta)-{ \mu}_1 (\theta)) u(\theta).
$$
Differentiating we obtain ${ \mu}(\theta)-{ \mu}_1(\theta)=-c$, which proves the lemma. 
\end{proof}

The $u$-involute of $N_0$ is any curve whose $u$-evolute is $N_0$. By lemma \ref{lemma:involute}, there exists a one parameter family of $u$-involutes of $N_0$, namely, the $u$-equidistants
of $\gamma$. 

\subsection{ Mixed area and an isoperimetric inequality}

Assume now that $\gamma$ is the smooth boundary of a convex region $\Gamma$. Parameterize $\gamma$ satisfying equation \eqref{eq:parametergamma}, with $0\leq\theta\leq 2\pi$. 

The mixed area of $\gamma$ and $u$ is given by
$$
A(\gamma,u)=\frac{1}{2}\int_0^{2\pi}[u,\gamma']d\theta=\frac{1}{2}\int_0^{2\pi}\lambda(\theta)d\theta=\frac{1}{2} L_v(\gamma),
$$
where $L_v$ denotes the $\V$-length of $\gamma$. Denoting by $A(\gamma)$ and $A(u)$ the areas of $\Gamma$ and $\U$, 
the Minkowski inequality 
$$
A(\gamma,u)^2\geq A(\gamma)A(u)
$$
implies that
\begin{equation}\label{ineq:isoperimetric}
L_v^2\geq 4A(\gamma)A(u),
\end{equation}
with equality if and only if $\gamma$ is homothetic to $u$. For more details, see \cite{Thompson96}, ch.4.

\section{Constant width curves in the Minkowski plane}

From now on, $\gamma$ denote a smooth curve, boundary of a convex planar region $\Gamma$. Consider a parameterization $\gamma(\theta)$, $0\leq\theta\leq 2\pi$, of $\gamma$
$\gamma'(\theta)$ is a non-negative multiple of $e_{\theta}$. We shall assume that the curvature of $\gamma$ is strictly positive, for any $0\leq\theta\leq 2\pi$.  

\subsection{Minkowski metric associated with a convex curve}

In this section we shall define a Minkowski metric $u$ such that $\gamma$ becomes a constant $u$-width curve. 

Given $\gamma$ as above, its area evolute $M$ is given by
\begin{equation}\label{eq:defineM}
M(\theta)=\frac{1}{2}\left( \gamma(\theta)+\gamma(\theta+\pi) \right).
\end{equation}

When the diameters of $\gamma_1(\theta)$ are parallel to the diameters of $\gamma_2(\theta)$, for any $0\leq\theta\leq 2\pi$, we shall simply say that  $\gamma_1$ and $\gamma_2$ are parallel .
Defining $u(\gamma)$ by
\begin{equation}\label{eq:defineM}
u(\gamma)(\theta)=\frac{1}{2}\left( \gamma(\theta)-\gamma(\theta+\pi) \right),
\end{equation}
we obtain that $u(\gamma)$ is symmetric and parallel to $\gamma$. One can also verify easily that the curvature of $u$ is strictly positive.

Next lemma says that $\gamma$ is parallel to a symmetric curve $u$ if and only if $u$ is homothetic to $u(\gamma)$. 

\begin{lemma}
Up to homothety, $u(\gamma)$ is the only symmetric curve parallel to $\gamma$.
\end{lemma}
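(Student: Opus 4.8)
The plan is to unwind both directions using the parallelism characterization in terms of parametrized derivatives. Since every curve here is parametrized so that its derivative at $\theta$ is a nonnegative multiple of $e_\theta$, two curves $\gamma_1,\gamma_2$ have parallel tangents at corresponding parameters automatically; the substantive content of ``parallel'' (diameters parallel) is that the chord direction $\gamma(\theta)-\gamma(\theta+\pi)$ has a prescribed direction for each $\theta$. So I would first record the elementary fact that a symmetric curve $u$ (meaning $u(\theta+\pi)=-u(\theta)$) is parallel to $\gamma$ exactly when the diameter direction of $\gamma$ at $\theta$ is the direction of $u(\theta)$, i.e. when $\gamma(\theta)-\gamma(\theta+\pi)$ is a positive multiple of $u(\theta)$ for every $\theta$.

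Next I would handle the easy direction: if $u=\rho\, u(\gamma)$ for a constant $\rho>0$ (homothety), then by \eqref{eq:defineM} $u(\theta)=\tfrac{\rho}{2}(\gamma(\theta)-\gamma(\theta+\pi))$, which is manifestly symmetric and points along the diameter of $\gamma$, so $u$ is parallel to $\gamma$. For the converse, suppose $u$ is symmetric and parallel to $\gamma$. Then for each $\theta$ there is a scalar $r(\theta)>0$ with
\[
\gamma(\theta)-\gamma(\theta+\pi) = r(\theta)\, u(\theta).
\]
Comparing with the definition of $u(\gamma)$, this says $2\,u(\gamma)(\theta)=r(\theta)u(\theta)$, so $u(\gamma)$ and $u$ agree up to a scalar function $r(\theta)/2$; it remains to show $r$ is constant. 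For that I differentiate the displayed identity: the left side differentiates to $\gamma'(\theta)-\gamma'(\theta+\pi)$, and since both $\gamma'(\theta)$ and $\gamma'(\theta+\pi)$ are (nonnegative) multiples of $e_\theta$ — note $e_\theta$ is the same at $\theta$ and $\theta+\pi$ up to sign, and one checks the signs work out so that the difference is again a multiple of $e_\theta$, in fact a positive one since $\gamma$ is convex — the derivative of the left side is a multiple of $e_\theta$. The right side differentiates to $r'(\theta)u(\theta)+r(\theta)u'(\theta)$, where $u'(\theta)$ is a nonnegative multiple of $e_\theta$ by our standing parametrization of symmetric convex curves. Hence $r'(\theta)u(\theta)$ must be a multiple of $e_\theta$. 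But $u(\theta)=a(\theta)e_r+a'(\theta)e_\theta$ with $a(\theta)>0$ (strict convexity, support function positive after centering), so $u(\theta)$ has a nonzero $e_r$-component and is never parallel to $e_\theta$; therefore $r'(\theta)=0$ for all $\theta$, i.e. $r$ is constant, and $u=\tfrac{r}{2}u(\gamma)$ is homothetic to $u(\gamma)$.

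The one point I would be careful about — and the only place a reader might stumble — is the sign/direction bookkeeping: verifying that $\gamma'(\theta)-\gamma'(\theta+\pi)$ and $u'(\theta)$ are genuinely (nonnegative) multiples of the same vector $e_\theta$ at the parameter $\theta$, given that all curves are parametrized by the ``tangent direction'' convention stated just before Section~3 and in Section~2.1. This is where convexity (strict positivity of curvature) enters: it guarantees the parametrizing angle is monotone, so $\lambda(\theta)\ge 0$ in \eqref{eq:parametergamma} and the analogous coefficient for $u$ are nonnegative and the vectors do not reverse orientation under $\theta\mapsto\theta+\pi$. Once that is pinned down, the argument is just the observation that $u(\theta)\notparallel e_\theta$ forces the scalar function to be constant. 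I expect no real obstacle beyond making this convention-level claim precise.
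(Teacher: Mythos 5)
Your proof is correct and takes essentially the same route as the paper: the paper writes $\gamma(\theta)=M(\theta)+c(\theta)u(\theta)$ and differentiates, concluding $c'=0$ because $\gamma'$, $M'$, $u'$ are all multiples of $e_\theta$ while $u(\theta)$ is transverse to $e_\theta$; your identity $\gamma(\theta)-\gamma(\theta+\pi)=r(\theta)u(\theta)$ with $r=2c$ is the same decomposition and the same transversality argument. Your extra care with the sign bookkeeping for $\gamma'(\theta)-\gamma'(\theta+\pi)$ is exactly the step the paper leaves implicit, and it is handled correctly.
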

\begin{proof}
Take $u$ any symmetric curve parallel to $\gamma$.  We can write $\gamma(\theta)=M(\theta)+c(\theta)u(\theta)$, for some $c(\theta)$. 
Now $\gamma'(\theta)=M'(\theta)+c(\theta)u'(\theta)+c'(\theta)u(\theta)$, which implies that $c(\theta)$ is a constant. We conclude that $u$ is homothetic to $u(\gamma)$.
\end{proof}

From now on, we shall denote simply by $u$ any curve homothetic to $u(\gamma)$. 
For a fixed $c$, define the  {\it equidistant} of $\gamma$ at level $c$ with respect to $u$ by
\begin{equation}\label{eq:equi}
\gamma_c(\theta)=M(\theta)+c u(\theta),
\end{equation}
Thus we have a one-parameter family of equidistants that includes $\gamma$ and the $0$-equidistant $M$. 
It is not difficult to verify that 
two curves $\gamma_1$ and $\gamma_2$ are equidistants if and only if $M(\gamma_1)=M(\gamma_2)$ and $u(\gamma_1)$ is homothetic to $u(\gamma_2)$.

We shall consider the Minkowski plane with the metric defined by $u$. 
We say that $\gamma$ has {\it constant $u$-width} if $\gamma(\theta)-\gamma(\theta+\pi)=2cu(\theta)$, for some constant $c$.  

\begin{lemma}\label{lemma:gammaconstantwidth}
$\gamma$ has constant $u$-width if and only if $\gamma$ and $u$ are parallel.
\end{lemma}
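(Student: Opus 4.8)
The plan is to unwind both directions directly from the definitions, using the parameterization conventions already fixed in the section. Recall that $\gamma$ has constant $u$-width means $\gamma(\theta)-\gamma(\theta+\pi)=2cu(\theta)$ for some constant $c$, and by definition \eqref{eq:defineM} the left-hand side is always $2u(\gamma)(\theta)$. So the condition is precisely $u(\gamma)(\theta)=cu(\theta)$ for a constant $c$, i.e. $u(\gamma)$ is homothetic to $u$ (the degenerate case $c=0$ aside, where $\gamma$ is itself symmetric). By the preceding lemma, $u(\gamma)$ is, up to homothety, the unique symmetric curve parallel to $\gamma$; hence $u$ being homothetic to $u(\gamma)$ is equivalent to $u$ being parallel to $\gamma$. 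This already gives the equivalence, but I would spell out both implications for clarity.

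For the forward direction, assume $\gamma$ has constant $u$-width. Then $2u(\gamma)(\theta)=\gamma(\theta)-\gamma(\theta+\pi)=2cu(\theta)$, so $u(\gamma)=cu$. Differentiating, $u(\gamma)'(\theta)=cu'(\theta)$; since $u(\gamma)'$ and $u'$ are both non-negative multiples of $e_\theta$ and $\gamma'(\theta)=M'(\theta)+u(\gamma)'(\theta)$ while $M'(\theta)=\frac12(\gamma'(\theta)+\gamma'(\theta+\pi))$ is also a multiple of $e_\theta$, every tangent direction in sight is $e_\theta$; in particular $\gamma'(\theta)$ and $u'(\theta)$ are parallel for all $\theta$, which is exactly the statement that $\gamma$ and $u$ are parallel (their diameters, being the chords with tangent direction $e_\theta$, coincide in direction).

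For the converse, assume $\gamma$ and $u$ are parallel. By the previous lemma applied with the symmetric curve $u$, we get that $u$ is homothetic to $u(\gamma)$, say $u(\gamma)=cu$ for a constant $c$ (using that both are parameterized compatibly with $e_\theta$, the homothety constant is a genuine scalar, not merely a similarity). Then $\gamma(\theta)-\gamma(\theta+\pi)=2u(\gamma)(\theta)=2cu(\theta)$, so $\gamma$ has constant $u$-width by definition.

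\textbf{Main obstacle.} There is no real analytic difficulty here; the statement is essentially a repackaging of the definition of $u(\gamma)$ together with the uniqueness lemma just proved. The one point that needs a little care is bookkeeping the parameterizations: "parallel" is defined via diameters having parallel tangents, and one must make sure this matches "$u(\gamma)$ homothetic to $u$" as oriented curves parameterized by the angle $\theta$ with $u'(\theta)\parallel e_\theta$, so that the homothety constant $c$ is well-defined and appears cleanly in the width condition. Once the conventions are aligned, the proof is a two-line computation in each direction.
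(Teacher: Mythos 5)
Your proof is correct and follows essentially the same route as the paper: both directions reduce, via the definition $u(\gamma)(\theta)=\frac{1}{2}(\gamma(\theta)-\gamma(\theta+\pi))$ and the preceding uniqueness-up-to-homothety lemma, to the equivalence of constant $u$-width with $u(\gamma)=cu$. (The aside about the degenerate case $c=0$ is vacuous, since for a strictly convex curve the diameter vector never vanishes.)
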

\begin{proof}
If $\gamma$ is parallel to $u$, then it is given by equation \eqref{eq:equi} for some $M$. Thus $\gamma$ has constant $u$-width. 
Reciprocally, write $\gamma(\theta)=M(\theta)+c_1u(\gamma)(\theta)$. If $\gamma$ has constant $u$-width, 
$2c_1u(\gamma)(\theta)=2c u(\theta)$, for some $c$. Thus
$u(\gamma)$ is homothetic to $u$ and hence $\gamma$ is parallel to $u$.
\end{proof}

From lemma \ref{lemma:gammaconstantwidth}, we conclude that $\gamma$ has constant $u$-width. For more details of this section, see \cite{Chakerian83}. 

\subsection{Cusps of the equidistants}

Denote by $\alpha(\theta)$ the $u$-curvature radius of $M$ at $M(\theta)$. By lemma \ref{lemma:curvature}, we can write
\begin{equation}\label{eq:definealpha}
M'(\theta)=\alpha(\theta)u'(\theta),
\end{equation}
Then the $u$-curvature radius of the equidistant $\gamma_c$ is $\alpha+c$, i.e.,
\begin{equation*}
\gamma_c'(\theta)=(\alpha(\theta)+c)u'(\theta).
\end{equation*}  

The cusps of $\gamma_c$ corresponds to points where $\alpha+c$ is changing sign.  
Observe that
\begin{equation*}
\gamma_c''=(\alpha+c)'u'+(\alpha+c)u''
\end{equation*}
and so
\begin{equation*}
[\gamma_c',\gamma_c'']=(\alpha+c)^2[u',u''].
\end{equation*}
We conclude that $\gamma_c$ is convex outside cusps. In particular, $\gamma_c$ is convex  for $c\geq ||\alpha||_{\infty}$, 
where
$$
||\alpha||_{\infty}= \sup_{\theta\in[0,\pi]}|\alpha(\theta)|.
$$

Next proposition says that the number of cusps of $M$ is odd and at least three. A proof of this fact can be found in \cite{Giblin08}.
We give another proof here for the sake of completeness. 

\begin{Proposition}\label{prop:cuspsM}
The number of cusps of $M$ is odd and bigger than or equal to three. 
\end{Proposition}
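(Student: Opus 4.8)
The plan is to study the cusps of $M$ through the sign changes of the $u$-curvature radius $\alpha(\theta)$, exploiting the antipodal symmetry inherited from the definition $M(\theta)=\frac12(\gamma(\theta)+\gamma(\theta+\pi))$. First I would record the key symmetry: since $\gamma(\theta+\pi)$ and $\gamma(\theta)$ swap under $\theta\mapsto\theta+\pi$, we get $M(\theta+\pi)=M(\theta)$, so $M$ is genuinely a closed curve traced on the parameter interval $[0,\pi]$ with the endpoints identified. Differentiating $M'(\theta)=\alpha(\theta)u'(\theta)$ and using that $u$ is symmetric, $u(\theta+\pi)=-u(\theta)$, hence $u'(\theta+\pi)=-u'(\theta)$, one finds $\alpha(\theta+\pi)=-\alpha(\theta)$. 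Actually the cleaner statement is that the cusps of $M$ occur exactly where $\alpha$ changes sign (as already observed in the text for $\gamma_c$ with $c=0$), and on the circle $\R/\pi\Z$ the function $\alpha$ must change sign an even number of times — call it $2k$ — so that $M$, viewed on $[0,\pi]$, has $2k$ cusps; but tracing the full curve $\gamma$ over $[0,2\pi]$ the relation $\alpha(\theta+\pi)=-\alpha(\theta)$ doubles this count in a way that forces oddness. I would make this precise by counting sign changes of $\alpha$ on $[0,2\pi]$ directly: the antisymmetry $\alpha(\theta+\pi)=-\alpha(\theta)$ means the sign pattern on $[\pi,2\pi]$ is the negation of that on $[0,\pi]$, which forces the total number of sign changes on the circle $\R/2\pi\Z$ to be twice an odd number, hence the number of cusps of $M$ (each counted once as $M$ is traversed) is odd.

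For the lower bound of three, I would argue by contradiction. If $M$ had exactly one cusp, then $\alpha$ would change sign exactly once on $[0,\pi]$; but then $\int_0^{\pi}\alpha(\theta)\,d\theta$ would relate to the total turning of $M'$ in a way incompatible with $M$ being a closed curve. More concretely, the tangent direction of $M$ is $\pm u'(\theta)$ depending on $\operatorname{sign}(\alpha(\theta))$, and a closed curve with only one cusp would have a tangent indicatrix that fails to close up correctly — the tangent winding number argument. The standard way to see this: $M$ closed implies $\int_0^{2\pi} M'(\theta)\,d\theta = 0$, i.e. $\int_0^{2\pi}\alpha(\theta)u'(\theta)\,d\theta=0$, and expanding $u'(\theta)=(a(\theta)+a''(\theta))e_\theta(\theta)$ with the positivity $(a+a'')>0$, one gets two scalar conditions on $\alpha$ that cannot be met if $\alpha$ has only one sign change on $[0,\pi]$ (a single sign change makes a suitable weighted integral nonzero by a positivity/monotonicity argument, analogous to the four-vertex-theorem style estimates).

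I expect the main obstacle to be the lower-bound part rather than the parity part: the parity follows quite mechanically from $\alpha(\theta+\pi)=-\alpha(\theta)$, but ruling out exactly one cusp requires the right integral identity together with a positivity argument in the spirit of the four-vertex theorem. The cleanest route is probably: if $\alpha$ changes sign only once on $\R/\pi\Z$, pick the sign-change point as a new origin so that $\alpha\ge 0$ on $[0,\pi/2)$-type interval and $\alpha\le 0$ on the complement (after suitable reparametrization); then there is a linear functional $\ell$ on $\R^2$, namely $\ell(w)=[w,u'(\theta_0)]$ for the other sign-change location $\theta_0$, such that $\ell(u'(\theta))$ has a single sign change matching that of $\alpha$, whence $\int_0^{2\pi}\alpha(\theta)\,\ell(u'(\theta))\,d\theta\ne 0$, contradicting $\int_0^{2\pi}\alpha u'\,d\theta=0$. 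Filling in the precise choice of functional and checking the sign bookkeeping is the delicate step; everything else is bookkeeping with the antipodal symmetry.
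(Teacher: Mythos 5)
Your proposal is correct and follows essentially the same route as the paper: parity comes from the antisymmetry $\alpha(\theta+\pi)=-\alpha(\theta)$ (the paper phrases this via $\Lambda(\theta)=\lambda(\theta+\pi)-\lambda(\theta)$, which is proportional to $-\alpha[u,u']$), and a single cusp is excluded by pairing the closedness relation $\int_0^{2\pi}\alpha u'\,d\theta=0$ with a linear functional whose sign changes match the putative unique sign change of $\alpha$ --- exactly the paper's ``horizontal displacement computed two ways'' identity $\int_0^{\pi}\Lambda(\theta) v_1(\theta)\,d\theta=0$ with $v_1$ single-signed on $(0,\pi)$. Only tidy up the intermediate remark that $\alpha$ lives on $\R/\pi\Z$ with an even number $2k$ of sign changes: being anti-periodic it does not descend to $\R/\pi\Z$, and the count you ultimately rely on (an odd number of sign changes per half-period, because $\alpha(\theta_0+\pi)=-\alpha(\theta_0)$ forces an overall sign reversal across $[\theta_0,\theta_0+\pi]$) is the correct one.
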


\begin{proof}
Write $\gamma'(\theta)=\lambda(\theta) v(\theta)$, $\lambda>0$. We look for zeros of $\Lambda(\theta)=\lambda(\theta+\pi)-\lambda(\theta)$. 
Since $\Lambda(\theta+\pi)=-\Lambda(\theta)$, the number of zeros is odd. We have to verify that this number cannot be one.
We may assume that $\Lambda(0)=0$. Then the horizontal distance between $\gamma(0)$ and $\gamma(\pi)$ is $\int_0^{\pi}\gamma_1'(\theta+\pi)d\theta$
and also $-\int_0^{\pi}\gamma_1'(\theta)d\theta$, where $\gamma_1'$ denotes the horizontal component of $\gamma'$. Thus
$$
\int_0^{\pi}\lambda(\theta+\pi)v_1(\theta)d\theta-\int_0^{\pi}\lambda(\theta)v_1(\theta) d\theta=0,
$$
where $v_1$ denotes the horizontal component of $v$. From this equation it follows that $\Delta(\theta)=0$ at least once in the interval $(0,\pi)$. 
\end{proof}

\subsection{ Barbier's theorem}

For $c\geq ||\alpha||_{\infty}$, the curve $\gamma_c$ is convex. Then the $\V$ length $L_v$ of $\gamma_c$ is
$$
L_v=\int_{\theta=0}^{2\pi} (\alpha+c)[u,u']d\theta= 2A(u)c,
$$
where the last equality comes from $\alpha(\theta+\pi)=-\alpha(\theta)$. In the Euclidean case, this result is known as Barbier's theorem. Observe that it can also be written as
\begin{equation}\label{eq:Barbier}
A(\gamma_c,u)=A(u)c. 
\end{equation}

If we admit signed lengths, Barbier's theorem can be extended to equidistants with cusps. 
In particular, the signed $v$-length of $M$ is zero. In fact, this last result holds not only for constant width curves, but for any smooth closed convex curve (\cite{Tabach97}). 

\subsection{ Signed area of the area evolute}\label{sec:signedareas}

Consider a convex constant $u$-width curve $\gamma_c$. From Barbier's theorem, the isoperimetric inequality \eqref{ineq:isoperimetric} can be written as 
\begin{equation}\label{eq:Iso1}
A(\gamma_c)\leq c^2A(u),
\end{equation}
with equality only for $M=0$. 
This result can also be extended to non-convex equidistants by considering mixed areas. 
\begin{lemma}
For any equidistant $\gamma_c$ we have 
\begin{equation*}
A(\gamma_c,\gamma_c)\leq c^2A(u),
\end{equation*}
with equality if and only if $M=0$. 
\end{lemma}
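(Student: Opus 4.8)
The plan is to reduce the non-convex statement to the convex case already established in \eqref{eq:Iso1}, using the bilinearity of mixed area together with Barbier's theorem. First I would write a general equidistant as $\gamma_c = M + cu$ and recall that $M = \gamma_0$, so that $\gamma_c = \gamma_0 + cu$. Since mixed area $A(\cdot,\cdot)$ is symmetric and bilinear on the cone of such curves (this is standard for support-function-type expressions; here it follows from $A(\gamma_c,\gamma_d) = \frac12\int_0^{2\pi}[\,\delta,\gamma_d'\,]\,d\theta$ being bilinear in the pair, after expressing everything via the support-type data $\alpha$ and $a$), I expand
\begin{equation*}
A(\gamma_c,\gamma_c) = A(M,M) + 2c\,A(M,u) + c^2 A(u,u).
\end{equation*}
Here $A(u,u) = A(u)$, the area of $\U$. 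The cross term $A(M,u)$ is the signed mixed area of the area evolute with $u$; by the extension of Barbier's theorem at the end of the previous subsection (the signed $v$-length of $M$ is zero, equivalently $A(M,u)=0$ since $A(M,u)=\frac12 L_v(M)$ in signed form, using $\alpha(\theta+\pi)=-\alpha(\theta)$), this term vanishes. Hence
\begin{equation*}
A(\gamma_c,\gamma_c) = A(M,M) + c^2 A(u),
\end{equation*}
so the claimed inequality $A(\gamma_c,\gamma_c)\le c^2 A(u)$ is equivalent to $A(M,M)\le 0$, with equality iff $M=0$.

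It therefore remains to prove $A(M,M)\le 0$ with equality iff $M$ reduces to a point. For this I would pick any $c_0\ge \|\alpha\|_\infty$, so that $\gamma_{c_0}$ is a genuine convex constant $u$-width curve; then \eqref{eq:Iso1} gives $A(\gamma_{c_0})\le c_0^2 A(u)$ with equality only for $M=0$. Since $\gamma_{c_0}$ is convex, $A(\gamma_{c_0},\gamma_{c_0})=A(\gamma_{c_0})$, and by the displayed expansion with $c=c_0$ this reads $A(M,M)+c_0^2 A(u)\le c_0^2 A(u)$, i.e. $A(M,M)\le 0$, with equality precisely when $M=0$. Combining with the expansion for arbitrary $c$ finishes the proof.

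The main obstacle I anticipate is making the bilinearity of the mixed area rigorous in the signed (non-convex) setting and justifying $A(M,u)=0$ cleanly: the curves involved may have cusps, so one should define $A(\gamma_c,\gamma_d)$ directly by the integral formula $\frac12\int_0^{2\pi}[u(\gamma_d),\gamma_c']\,d\theta$ (or an equivalent symmetric version) rather than via areas of regions, check it is symmetric and additive under the operation $(M,c)\mapsto M+cu$, and then invoke the signed Barbier theorem for the vanishing of the cross term. Once the algebraic identity $A(\gamma_c,\gamma_c)=A(M,M)+c^2A(u)$ is in place, the reduction to \eqref{eq:Iso1} is immediate, and the equality case is inherited verbatim from the isoperimetric equality case.
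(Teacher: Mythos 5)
Your proof is correct and follows essentially the same route as the paper: expand $A(\gamma_c,\gamma_c)$ by bilinearity of the mixed area, eliminate the cross term via (signed) Barbier, and reduce to the convex isoperimetric inequality \eqref{eq:Iso1}. The only difference is cosmetic — you center the expansion at $M=\gamma_0$ and use $A(M,u)=0$, while the paper centers it at a convex $\gamma_{c_1}$ and uses $A(\gamma_{c_1},u)=c_1A(u)$; the two are algebraically equivalent.
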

\begin{proof}
Write $\gamma_{c}=\gamma_{c_1}+(c-c_1)u$, for some $c_1$ with $\gamma_{c_1}$ convex.  Then
$$
A(\gamma_{c},\gamma_{c})=A(\gamma_{c_1})+2(c-c_1)A(\gamma_{c_1},u)+(c-c_1)^2A(u)
$$
Using equation \eqref{eq:Barbier} we obtain 
$$
A(\gamma_{c},\gamma_{c})=A(\gamma_{c_1})-(c_1^2-c^2)A(u).
$$
Using now equation \eqref{eq:Iso1} we conclude the proof.
\end{proof}

In particular, the mixed area $A(M,M)$ of $M$ is non-positive and equals zero only for $M=0$. We define the signed area of $M$ a
by 
\begin{equation}
SA(M)=-A(M,M).
\end{equation}
Thus $SA(M)$ is non-negative and equal $0$ only for $M=0$. When $M$ has no self-intersections, it corresponds to the area of the region bounded by $M$. 

\subsection{ Involutes of constant width curves }

In this section we define the $v$-involute $N$ of $M$. 

Let
\begin{equation}\label{eq:definebeta}
\beta(\theta)=\frac{1}{2}\int_{\theta}^{\theta+\pi}\alpha(s)[u,u'](s)ds
\end{equation}
where $\alpha$ is defined by equation \eqref{eq:definealpha}, and define 
\begin{equation}\label{eq:defineinvoluta}
N(\theta)=M(\theta)+\beta(\theta)v(\theta).
\end{equation}
Let $\eta_d$ denote the one parameter family of $v$-equidistants of $N$, i.e.,
\begin{equation}\label{eq:equidistantsN}
\eta_d(\theta)=N(\theta)+dv(\theta).
\end{equation}

\begin{lemma}
We have that
\begin{equation}\label{eq:derivativeN}
N'(\theta)=\beta(\theta)v'(\theta)
\end{equation}
and the curves $\eta_d$ are constant $v$-width curves with $v$-curvature radius $\beta+d$. Moreover
for any $d$, the $v$-evolute of $\eta_d$ is $M$. 
\end{lemma}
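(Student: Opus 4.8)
The plan is to establish the three claims in order, with equation~\eqref{eq:derivativeN} carrying essentially all the content. First I would differentiate the definition~\eqref{eq:defineinvoluta}, obtaining $N'(\theta)=M'(\theta)+\beta'(\theta)v(\theta)+\beta(\theta)v'(\theta)$. By~\eqref{eq:definealpha} together with~\eqref{eq:definev} one has $M'(\theta)=\alpha(\theta)u'(\theta)=\alpha(\theta)[u,u'](\theta)\,v(\theta)$, so proving $N'=\beta v'$ amounts to showing $\beta'(\theta)=-\alpha(\theta)[u,u'](\theta)$.

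To get the latter I would differentiate~\eqref{eq:definebeta}, which gives $\beta'(\theta)=\frac{1}{2}\bigl(\alpha(\theta+\pi)[u,u'](\theta+\pi)-\alpha(\theta)[u,u'](\theta)\bigr)$, and then invoke two periodicity facts. Since $\U$ is symmetric, $u(\theta+\pi)=-u(\theta)$ and hence $u'(\theta+\pi)=-u'(\theta)$, so $[u,u']$ is $\pi$-periodic; and since $M$ is $\pi$-periodic by its definition, differentiating $M(\theta+\pi)=M(\theta)$ and comparing with $M'=\alpha u'$ yields $\alpha(\theta+\pi)=-\alpha(\theta)$. Substituting, the two terms add up to $-\alpha(\theta)[u,u'](\theta)$, which is exactly what is needed, so~\eqref{eq:derivativeN} follows. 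I expect this periodicity bookkeeping to be the only real step; everything else is formal.

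For the curvature statement, from~\eqref{eq:equidistantsN} and~\eqref{eq:derivativeN} I would compute $\eta_d'(\theta)=N'(\theta)+dv'(\theta)=(\beta(\theta)+d)v'(\theta)$; reading this in the Minkowski plane with unit ball $\V$ and comparing with the $v$-analogue of~\eqref{eq:definealpha} shows that the $v$-curvature radius of $\eta_d$ is $\beta+d$. That $\eta_d$ has constant $v$-width I would deduce from Lemma~\ref{lemma:gammaconstantwidth} applied in that plane: it suffices that $\eta_d$ be parallel to $v$, equivalently that $N$ be $\pi$-periodic. But the substitution $s\mapsto s+\pi$ in~\eqref{eq:definebeta}, using the two periodicity facts above, gives $\beta(\theta+\pi)=-\beta(\theta)$, while $v(\theta+\pi)=-v(\theta)$, so $\beta(\theta+\pi)v(\theta+\pi)=\beta(\theta)v(\theta)$; since $M$ is $\pi$-periodic, so is $N$, and in particular $\eta_d(\theta)-\eta_d(\theta+\pi)=2d\,v(\theta)$.

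Finally, the $v$-evolute of $\eta_d$ is the envelope of the $v$-normal lines $\theta\mapsto\eta_d(\theta)+s\,v(\theta)$. Differentiating in $\theta$ gives $(\beta(\theta)+d+s)\,v'(\theta)+s'\,v(\theta)$, and since $v$ and $v'$ are independent the envelope condition forces $s=-(\beta(\theta)+d)$; hence the $v$-evolute is $\eta_d(\theta)-(\beta(\theta)+d)v(\theta)=N(\theta)-\beta(\theta)v(\theta)=M(\theta)$, independent of $d$. Equivalently, this is just the evolute formula $N_0=\gamma-\mu u$ read off in the $v$-metric with $\mu=\beta+d$.
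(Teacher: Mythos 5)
Your proof is correct and follows essentially the same route as the paper's: the paper's one-line computation $N'=\alpha u'-\alpha[u,u']v+\beta v'=\beta v'$ is exactly your identity $\beta'=-\alpha[u,u']$ combined with $u'=[u,u']v$, and the curvature, width and evolute claims are handled the same way. You merely make explicit the periodicity bookkeeping ($[u,u']$ is $\pi$-periodic, $\alpha(\theta+\pi)=-\alpha(\theta)$, hence $\beta(\theta+\pi)=-\beta(\theta)$ and $\eta_d(\theta)-\eta_d(\theta+\pi)=2dv(\theta)$) that the paper leaves implicit.
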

\begin{proof}
Observe that
$$
N'(\theta)=\alpha(\theta)u'-\alpha(\theta)[u,u']v+\beta(\theta)v'=\beta(\theta)v'.
$$
which proves equation \eqref{eq:derivativeN}. This equation implies that the curves $\eta_d$ are constant $v$-width curves and that 
the $v$-curvature radius of $\eta_d$ is $\beta+d$. Finally, the evolute of $\eta_d$ is given by
$$
\eta_d(\theta)-(\beta(\theta)+d)v(\theta)=N(\theta)-\beta(\theta)v(\theta)=M(\theta).
$$
which completes the proof of the lemma.
\end{proof}

We conclude from the above lemma that $\eta_d$ is an involute of $M$, for any $d$, and we shall write $N={\mathcal Inv}(M)$. 

If $d\geq ||\beta||_{\infty}$, the equidistant $\eta_d$ is convex. 
Since $N$ is the AE of $\eta_d$, by proposition \ref{prop:cuspsM}, it has an odd number of cusps, at least three. In fact, 
$N$ has at most the same number of cusps as  $M$. 

\begin{lemma}
$M$ has at least the same number of cusps as $N$.
\end{lemma}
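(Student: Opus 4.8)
The plan is to relate the cusps of $M$ and $N$ through the differential relationship between their curvature radii. Recall that cusps of $M$ occur exactly where $\alpha$ changes sign, and cusps of $N$ occur where $\beta$ changes sign, where $\beta$ is defined by equation \eqref{eq:definebeta}. So the statement reduces to: the number of sign changes of $\beta$ on $[0,\pi]$ is at most the number of sign changes of $\alpha$ on $[0,\pi]$.

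First I would establish the key derivative identity. From equation \eqref{eq:definebeta} we have $\beta(\theta)=\frac{1}{2}\int_{\theta}^{\theta+\pi}\alpha(s)[u,u'](s)\,ds$, and since $\alpha(\theta+\pi)[u,u'](\theta+\pi)=-\alpha(\theta)[u,u'](\theta)$ (because $\alpha$ is $\pi$-antiperiodic and $[u,u']$ is $\pi$-periodic), differentiating gives $\beta'(\theta)=\frac{1}{2}\bigl(\alpha(\theta+\pi)[u,u'](\theta+\pi)-\alpha(\theta)[u,u'](\theta)\bigr)=-\alpha(\theta)[u,u'](\theta)$. Since $[u,u']>0$, the sign of $\beta'$ is exactly the opposite of the sign of $\alpha$. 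In other words, on each maximal interval where $\alpha$ has constant sign, $\beta$ is strictly monotone (increasing where $\alpha<0$, decreasing where $\alpha>0$).

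From here the counting argument is the heart of the matter. Between two consecutive sign changes of $\beta$, there must be at least one sign change of $\alpha$: indeed, if $\beta$ vanishes at $\theta_1<\theta_2$ with opposite signs on either side and $\alpha$ never changed sign in between, then $\beta$ would be strictly monotone on $(\theta_1,\theta_2)$, contradicting $\beta(\theta_1)=\beta(\theta_2)=0$ together with $\beta$ being non-constant (a strictly monotone function cannot take the value $0$ at two distinct points unless it is constant there). One must also check the ``wrap-around'' cusp at the junction near $\theta=0$ and $\theta=\pi$; this is handled by the antiperiodicity $\beta(\theta+\pi)=-\beta(\theta)$, which lets one run the same argument cyclically on $[0,2\pi]$, where both $\alpha$ and $\beta$ have an even number of sign changes and the $M$-count (resp. $N$-count) is half of that. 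Assigning to each $N$-cusp one of the $M$-cusps lying strictly between it and the next $N$-cusp gives an injection, hence $\#\{\text{cusps of }N\}\le\#\{\text{cusps of }M\}$.

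The main obstacle I anticipate is the bookkeeping around degenerate situations: a priori $\alpha$ could have a zero without a sign change, or $\beta'$ could vanish on a whole interval, and one needs the convexity/positive-curvature hypothesis plus the fact (from Proposition \ref{prop:cuspsM}, applied to $M$ as the AE of a convex constant-width curve and to $N$ as the AE of $\eta_d$) that both curves have genuinely finitely many cusps, all of which are true sign changes, so that ``number of cusps'' is well defined and equals the number of sign changes. Once that regularity is in place, the monotonicity-forces-a-zero argument is clean. I would present the proof as: (i) compute $\beta'=-\alpha[u,u']$; (ii) deduce strict monotonicity of $\beta$ on sign-constant intervals of $\alpha$; (iii) run the interleaving/injection argument cyclically using the antiperiodicity.
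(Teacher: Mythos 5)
Your proposal is correct and follows essentially the same route as the paper: both identify cusps of $N$ with sign changes of $\beta$ and cusps of $M$ with sign changes of $\alpha$, use the identity $\beta'=-\alpha[u,u']$ to turn the latter into sign changes of $\beta'$, and conclude by the interleaving observation that between two consecutive sign changes of $\beta$ there must be a sign change of $\beta'$. Your extra care with the cyclic wrap-around and degenerate zeros only makes explicit what the paper's two-line proof leaves implicit.
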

\begin{proof}
The cusps of $N$ are zeros of $\beta:[0,2\pi]\to\R$, while the cusps of $M$ occur when $\beta'(\theta)$ is changing sign. 
Since between two zeros of $\beta$ there is at least one change of sign of $\beta'$, the lemma is proved.
\end{proof}

\bigskip\bigskip

\begin{example}\label{ex:example1}
In the euclidean plane, let $u=e_r=(\cos(\theta),\sin(\theta))$ and $v=e_{\theta}=(-\sin(\theta),\cos(\theta))$. Let $\gamma_c(\theta)=M(\theta)+c e_r$, where 
$$
M(\theta)=\left( 2\sin(2\theta)-\sin(4\theta), 2\cos(2\theta)+\cos(4\theta)   \right).
$$
Straightforward calculations shows that
$$
M'(\theta)=-8\sin(3\theta) e_{\theta},
$$
and so $\alpha(\theta)=-8\sin(3\theta)$. From equation \eqref{eq:definebeta} we obtain $\beta(\theta)=-\frac{8}{3}\cos(3\theta)$ and hence, by equation \eqref{eq:defineinvoluta}, 
$$
N(\theta)= \left(    \frac{2}{3}\sin(2\theta)+\frac{1}{3}\sin(4\theta) , \frac{2}{3}\cos(2\theta)-\frac{1}{3}\cos(4\theta)    \right).
$$
Finally, 
$$
\int_0^{\pi} [M,M']d\theta=-4\pi, \ \ \ \int_0^{\pi} [N,N']d\theta=-\frac{4\pi}{9},
$$
which implies that the signed areas of $M$ and $N$ equals $4\pi$ and $\frac{4\pi}{9}$, respectively. Observe that since $M$ and $N$ have not self-intersections, 
the signed areas are in fact the areas of the regions bounded by $M$ and $N$. 

\begin{figure}[htb]
 \centering
 \includegraphics[width=0.50\linewidth]{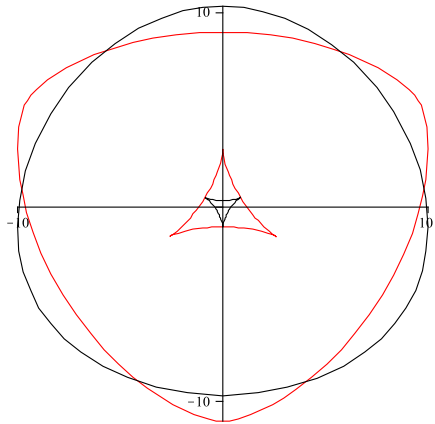}
 \caption{ The curves $M$ and $N$ with equidistants at level $c=10$. }
\label{fig:deltoide}
\end{figure}
\end{example}

\subsection{Rate of growth of the area difference}

Let $\gamma_c^1(\theta)$ denote the curve $\gamma_c(s),\ \theta\leq s\leq \theta+\pi$ and denote by  $A_1(c,\theta)$ the area of the region bounded by $\gamma_c^1(\theta)$ 
and the diameter $l(\theta)$. Let $A_2(c,\theta)=A(\gamma_c)-A_1(c,\theta)$.
Next proposition says that the growth of the area difference $A_1-A_2$ is linear with $c$ with rate $4\beta$. 
This result can be found in \cite{Chakerian83}, but we give a proof for the sake of completeness. 

\begin{Proposition}\label{prop:linear}
We have that
\begin{equation}\label{eq:linear}
A_1(c,\theta)-A_2(c,\theta)= 4c\beta(\theta).
\end{equation}
\end{Proposition}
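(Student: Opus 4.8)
The plan is to compute $A_1(c,\theta)$ directly as a signed area integral over the closed loop formed by the arc $\gamma_c^1(\theta)$ together with the diameter segment $l(\theta)$ from $\gamma_c(\theta+\pi)$ to $\gamma_c(\theta)$, and then to subtract the analogous expression for $A_2$, which is obtained by replacing $[\theta,\theta+\pi]$ with $[\theta+\pi,\theta+2\pi]$. Using the divergence/Green formula, the area enclosed by a closed curve $c(s)$ is $\frac12\oint [c,c']\,ds$, so
\begin{equation*}
A_1(c,\theta)=\frac12\int_{\theta}^{\theta+\pi}[\gamma_c(s),\gamma_c'(s)]\,ds+(\text{contribution of the straight segment }l(\theta)).
\end{equation*}
The straight-segment contribution is $\frac12[\gamma_c(\theta+\pi),\gamma_c(\theta)-\gamma_c(\theta+\pi)]=\frac12[\gamma_c(\theta+\pi),\gamma_c(\theta)]$ since $[w,w]=0$. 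When I form $A_1-A_2$, the two straight-segment contributions are traversed in opposite directions and cancel, so
\begin{equation*}
A_1(c,\theta)-A_2(c,\theta)=\frac12\int_{\theta}^{\theta+\pi}[\gamma_c(s),\gamma_c'(s)]\,ds-\frac12\int_{\theta+\pi}^{\theta+2\pi}[\gamma_c(s),\gamma_c'(s)]\,ds.
\end{equation*}

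Next I would substitute the equidistant formula $\gamma_c(\theta)=M(\theta)+cu(\theta)$ with $\gamma_c'(\theta)=(\alpha(\theta)+c)u'(\theta)$ from \eqref{eq:equi} and \eqref{eq:definealpha}. Expanding the bracket,
\begin{equation*}
[\gamma_c,\gamma_c']=(\alpha+c)\big([M,u']+c[u,u']\big).
\end{equation*}
Now I split off the part that does not depend on $\theta$ after integration over a full period. Over the difference of the two half-period integrals, the shift $s\mapsto s+\pi$ together with $M(s+\pi)=-M(s)$, $u(s+\pi)=-u(s)$, $\alpha(s+\pi)=-\alpha(s)$ should collapse things considerably: writing $g(s)=[\gamma_c(s),\gamma_c'(s)]$ one checks $g(s+\pi)=(-\alpha(s)+c)\big([M(s),u'(s)]+c[u(s),u'(s)]\big)$ (since $u'$ is $\pi$-antiperiodic as well, both sign flips in the bracket cancel, leaving the bracket unchanged). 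Therefore $g(s)-g(s+\pi)=2\alpha(s)\big([M(s),u'(s)]+c[u(s),u'(s)]\big)$, and
\begin{equation*}
A_1-A_2=\frac12\int_{\theta}^{\theta+\pi}\big(g(s)-g(s+\pi)\big)\,ds=\int_{\theta}^{\theta+\pi}\alpha(s)\big([M(s),u'(s)]+c[u(s),u'(s)]\big)\,ds.
\end{equation*}
By the definition \eqref{eq:definebeta} of $\beta$, the $c$-term contributes exactly $2c\beta(\theta)$; so I must show the $c$-independent term $\int_{\theta}^{\theta+\pi}\alpha(s)[M(s),u'(s)]\,ds$ also equals $2c\beta(\theta)$ — wait, that cannot be, since it has no $c$. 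So in fact the claim \eqref{eq:linear} forces that term to vanish, $\int_{\theta}^{\theta+\pi}\alpha(s)[M(s),u'(s)]\,ds=2c\beta(\theta)$ is wrong; rather the correct reading is that the constant term must itself equal $2c\beta(\theta)$ only through a more careful accounting. Here I would recheck the straight-segment cancellation: the segment $l(\theta)$ used for $A_1$ runs from $\gamma_c(\theta+\pi)$ to $\gamma_c(\theta)$, and for $A_2$ from $\gamma_c(\theta)$ to $\gamma_c(\theta+\pi)$, but the base point of the Green formula differs, so the two linear contributions are $\tfrac12[\gamma_c(\theta+\pi),\gamma_c(\theta)]$ and $\tfrac12[\gamma_c(\theta),\gamma_c(\theta+\pi)]=-\tfrac12[\gamma_c(\theta+\pi),\gamma_c(\theta)]$, and subtracting $A_2$ gives $+[\gamma_c(\theta+\pi),\gamma_c(\theta)]$ total, not zero. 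Using $\gamma_c(\theta+\pi)=M(\theta)-cu(\theta)$, $\gamma_c(\theta)=M(\theta)+cu(\theta)$ this is $[M-cu,M+cu]=2c[M,u]$. So
\begin{equation*}
A_1-A_2=\int_{\theta}^{\theta+\pi}\alpha(s)[M(s),u'(s)]\,ds+2c\beta(\theta)+2c[M(\theta),u(\theta)].
\end{equation*}
Then integration by parts on the first integral, using $M'=\alpha u'$ so that $\alpha[M,u']=[M,M']$, together with the identity $[M,M']'=[M,M'']=\alpha[M,u'']=\alpha[M,u']\!\cdot\!(\text{…})$ — more usefully, $\frac{d}{ds}[M(s),u(s)] = [M',u]+[M,u']=\alpha[u',u]+[M,u']$, gives a relation letting me replace $\int\alpha[M,u']$ by boundary terms plus $-\int\alpha[u',u] = \int\alpha[u,u']$ pieces; the latter is $2\beta(\theta)$-type and will combine, and the boundary terms will cancel the stray $2c[M,u]$ using $M(\theta+\pi)=-M(\theta)$.

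The main obstacle I anticipate is exactly this bookkeeping of the straight-segment (diameter) contributions and the integration-by-parts boundary terms: getting every sign right in the Green's-formula base-point conventions, and verifying that the $c$-independent pieces cancel completely so that only the clean $4c\beta(\theta)$ survives. Once the correct antisymmetry relations $M(\theta+\pi)=-M(\theta)$, $u(\theta+\pi)=-u(\theta)$, $\alpha(\theta+\pi)=-\alpha(\theta)$ are deployed systematically — together with the elementary identity $\frac{d}{ds}[M(s),u(s)]=\alpha(s)[u'(s),u(s)]+[M(s),u'(s)]$ for the integration by parts — the computation should close, and the factor $4$ emerges as $2+2$ from the two half-period arcs contributing equally after the symmetrization.
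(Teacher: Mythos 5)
Your overall strategy (Green's sector formula for the two lobes, exploit the $\pi$-shift symmetries, integrate by parts) is the right one and is in the same spirit as the paper's argument, but the write-up contains a concrete error that derails it and is never resolved. You assert $M(s+\pi)=-M(s)$. This is false: $M(\theta)=\tfrac12(\gamma(\theta)+\gamma(\theta+\pi))$ is the midpoint of the diameter, so $M$ is $\pi$-\emph{periodic}, $M(\theta+\pi)=M(\theta)$ (only $u$, $u'$ and $\alpha$ flip sign under the shift). Indeed you implicitly use the correct identity when you write $\gamma_c(\theta+\pi)=M(\theta)-cu(\theta)$, so your two uses of the symmetry contradict each other. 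The wrong sign is exactly what produces your spurious $c$-independent term: with the correct periodicity one finds
\begin{equation*}
g(s)-g(s+\pi)=2c\,[M(s),u'(s)]+2c\,\alpha(s)[u(s),u'(s)],
\end{equation*}
in which every term carries a factor of $c$, consistent with the claim; your version $2\alpha([M,u']+c[u,u'])$ has a term with no $c$, which is why you hit the ``wait, that cannot be'' impasse. With the corrected identity the computation does close: integrating by parts, $\int_\theta^{\theta+\pi}[M,u']\,ds=[M,u]\big|_\theta^{\theta+\pi}-\int_\theta^{\theta+\pi}\alpha[u',u]\,ds=-2[M(\theta),u(\theta)]+2\beta(\theta)$ (the boundary term is $-2[M(\theta),u(\theta)]$ because $u(\theta+\pi)=-u(\theta)$ while $M(\theta+\pi)=M(\theta)$), and the $-2c[M(\theta),u(\theta)]$ cancels against your straight-segment contribution $2c[M(\theta),u(\theta)]$, leaving $4c\beta(\theta)$. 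As written, though, your argument stops at ``the computation should close,'' so it is not yet a proof.

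For comparison, the paper avoids the entire segment/boundary-term bookkeeping by basing the sector-area formula at $M(\theta)$ rather than at the origin: since $M(\theta)$ lies on the chord $l(\theta)$, the straight-segment contributions to both $2A_1=\int_\theta^{\theta+\pi}[\gamma_c(s)-M(\theta),\gamma_c'(s)]\,ds$ and the corresponding expression for $A_2$ vanish identically, the $\pi$-shift kills the quadratic terms, and one integration by parts (whose boundary term is zero because $M(\theta+\pi)=M(\theta)$) gives $A_1-A_2=-2c\int_\theta^{\theta+\pi}[M',u]\,ds=4c\beta(\theta)$ directly. If you rework your version, either adopt that base point or carry out the cancellation above explicitly.
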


\begin{proof}
We have that 
$$
2A_1(\theta)=\int_{\theta}^{\theta+\pi}[\gamma(s)-M(\theta),\gamma'(s)] ds
$$
$$
=\int_{\theta}^{\theta+\pi}\left[cu(s)+M(s)-M(\theta), cu'(s)+M'(s) \right] ds.
$$
The area $A_2(\theta)$ is obtained by integrating the same integrand from $\theta+\pi$ to $\theta+2\pi$. Thus
$$
(A_1-A_2)(\theta)=\int_{\theta}^{\theta+\pi}\left[ M(s)-M(\theta), cu'(s)\right]- \left[ M'(s), cu(s)\right] ds.
$$
Since
$$
\int_{\theta}^{\theta+\pi}\left[ M(s)-M(\theta), cu'(s)\right] ds=- \int_{\theta}^{\theta+\pi} \left[ M'(s), cu(s)\right] ds,
$$
we conclude that 
\begin{equation*}
A_1(\theta)-A_2(\theta)=-2c\int_{\theta}^{\theta+\pi} [M'(s),u(s)] ds.
\end{equation*}
Hence
$$
A_1(\theta)-A_2(\theta)=2c\int_{\theta}^{\theta+\pi} \alpha[u, u'](s) ds=4c \beta(\theta), 
$$
thus proving the proposition. 
\end{proof}

\section{Some relations between the area evolute and the center symmetry set}

For the family $\eta_d$, $N$ is the area evolute and $M$ the center symmetry set.
In this section we compare the signed areas of $M$ and $N$ and prove that $N$ is contained in the interior of $M$.

\subsection{Relation between signed areas of $M$ and $N$}

Recall that the signed area of $M$ is non-negative, being zero only if $M=0$. Of course, the same holds for $N$. 

\begin{Proposition}\label{prop:signedMN}
Denoting by $SA(M)$ and $SA(N)$ the signed areas of $M$ and $N$, we have
$$
SA(M)-SA(N)=\int_0^{\pi}\beta^2 \left[ v,v' \right]d\theta. 
$$
\end{Proposition}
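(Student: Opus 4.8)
The plan is to compute both signed areas $SA(M)=-A(M,M)$ and $SA(N)=-A(N,N)$ as integrals over $[0,2\pi]$ and subtract. Recall $A(M,M)=\frac12\int_0^{2\pi}[M,M']\,d\theta$, and similarly for $N$. Since $N=M+\beta v$, bilinearity of the determinant gives
\begin{equation*}
[N,N']=[M+\beta v,\,M'+\beta'v+\beta v']=[M,M']+\beta[M,v']+\beta[v,M']+\beta^2[v,v']+(\text{terms with }\beta'),
\end{equation*}
where the $\beta'$-terms are $\beta'[M,v]+\beta\beta'[v,v]$; the second vanishes and the first integrates (after noting $[M,v]$ has a convenient antiderivative, or integrating by parts) in a way I will want to track carefully. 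So the first step is to expand $[N,N']-[M,M']$ fully and identify which pieces survive integration over a full period.

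The key simplification should come from the structural identities already established: $M'=\alpha u'$ (equation \eqref{eq:definealpha}), $[u,v]=1$, $[u',v]=0$, and $N'=\beta v'$ (equation \eqref{eq:derivativeN}), together with $\alpha=\beta'$ in the appropriate sense — more precisely, differentiating \eqref{eq:definebeta} gives $\beta'(\theta)=\frac12\bigl(\alpha[u,u']\bigr)(\theta+\pi)-\frac12\bigl(\alpha[u,u']\bigr)(\theta)=-\alpha[u,u'](\theta)$ using $\alpha(\theta+\pi)=-\alpha(\theta)$ and $[u,u'](\theta+\pi)=[u,u'](\theta)$. Using $M'=\alpha u'$ and \eqref{eq:dual} ($u=-v'/[v,v']$), the cross terms $\beta[v,M']=\beta\alpha[v,u']$ and $\beta[M,v']$ should be rewritten in terms of $v,v'$ and $\beta,\beta'$. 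I expect the cross terms to combine, after one integration by parts (using $\beta'=-\alpha[u,u']$ and periodicity to kill boundary terms), into something that either cancels or contributes a multiple of $\int\beta^2[v,v']$, leaving exactly $A(M,M)-A(N,N)=\int_0^{2\pi}\beta^2[v,v']\,d\theta$ up to the factor $\tfrac12$. Finally, since $\beta(\theta+\pi)=\beta(\theta)$ (immediate from \eqref{eq:definebeta}: replacing $\theta$ by $\theta+\pi$ and using $\alpha[u,u'](\theta+\pi)=-\alpha[u,u'](\theta)$ with reversed limits) and $[v,v'](\theta+\pi)=[v,v'](\theta)$ (since $v$ is symmetric), the integrand $\beta^2[v,v']$ is $\pi$-periodic, so $\int_0^{2\pi}=2\int_0^\pi$, which absorbs the $\tfrac12$ and yields the stated formula with $\int_0^\pi$.

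The main obstacle will be the cross terms $\int_0^{2\pi}\bigl(\beta[M,v']+\beta[v,M']\bigr)\,d\theta$: I need to show these together contribute nothing (or only the right multiple of $\int\beta^2[v,v']$). The tactic is to use $M'=\alpha u'$ to write $[v,M']=\alpha[v,u']$ and to handle $[M,v']$ by integration by parts, $\int\beta[M,v']=-\int(\beta M)'\cdot(\text{antiderivative of }v')$-type manipulation is awkward, so instead I would differentiate $[M,v]$: $\frac{d}{d\theta}[M,v]=[M',v]+[M,v']=\alpha[u',v]+[M,v']=[M,v']$ since $[u',v]=0$. Hence $\int_0^{2\pi}\beta[M,v']\,d\theta=\int_0^{2\pi}\beta\,\frac{d}{d\theta}[M,v]\,d\theta=-\int_0^{2\pi}\beta'[M,v]\,d\theta=\int_0^{2\pi}\alpha[u,u'][M,v]\,d\theta$. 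The remaining task is then to check that $\int_0^{2\pi}\bigl(\alpha[u,u'][M,v]+\alpha\beta[v,u']\bigr)\,d\theta=0$, which I would attack by exploiting the $\pi$-antiperiodicity of $\alpha$ and of $M$ (note $M(\theta+\pi)=M(\theta)$ actually — $M$ is $\pi$-periodic — while $u,v$ are $\pi$-antiperiodic), pairing $\theta$ with $\theta+\pi$ so that each cross term integrand changes sign and the full-period integral vanishes. Verifying these parity bookkeeping steps is the delicate part, but it is routine once the periodicities are laid out.
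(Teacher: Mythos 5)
Your strategy (expand $[N,N']-[M,M']$ by bilinearity and show the cross terms integrate to zero) can be made to work, but the reduction you end with is to a false identity. After your integration by parts $\int_0^{2\pi}\beta[M,v']\,d\theta=-\int_0^{2\pi}\beta'[M,v]\,d\theta$, the two cross terms $\beta'[M,v]$ and $\beta[M,v']$ from your expansion \emph{already cancel in the integral}, and the third cross term $\beta[v,M']=\alpha\beta[v,u']$ vanishes \emph{pointwise}, since by \eqref{eq:definev} $v$ is a scalar multiple of $u'$, so $[v,u']\equiv 0$. At that point you are done. Instead, you lose the $\beta'[M,v]$ term from the original expansion when you formulate the ``remaining task'' $\int_0^{2\pi}\bigl(\alpha[u,u'][M,v]+\alpha\beta[v,u']\bigr)\,d\theta=0$, and this identity is false: the second summand is identically zero, while the first is the product of two $\pi$-antiperiodic factors ($\alpha$ and $[M,v]$, the latter because $M$ is $\pi$-periodic and $v$ is $\pi$-antiperiodic) with the $\pi$-periodic $[u,u']$, hence is $\pi$-\emph{periodic} — your proposed pairing of $\theta$ with $\theta+\pi$ does not change its sign. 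Concretely, in Example~\ref{ex:example1} one computes $[M,v]=\sin 3\theta$, so $\alpha[u,u'][M,v]=-8\sin^2 3\theta$ and the ``remaining task'' integral equals $-8\pi\neq 0$. A second, smaller slip: $\beta(\theta+\pi)=-\beta(\theta)$ (substitute $s\mapsto s+\pi$ in \eqref{eq:definebeta} and use the antiperiodicity of $\alpha[u,u']$), not $+\beta(\theta)$; this is harmless for your last step since $\beta^2[v,v']$ is still $\pi$-periodic, but it should be stated correctly.

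For comparison, the paper sidesteps the expansion entirely: since $M=N-\beta v$ and $M'=\alpha u'$ with $[v,u']=0$ and $u'=[u,u']v$, one gets $[M,M']=\alpha[N,u']=-\beta'[N,v]$, while $[N,N']=\beta[N,v']$, so $[N,N']-[M,M']=[N,(\beta v)']$; a single integration by parts over $[0,\pi]$ (the boundary term vanishes because $N$ and $\beta v$ are $\pi$-periodic) then gives $-\int_0^\pi\beta[N',v]\,d\theta=\int_0^\pi\beta^2[v,v']\,d\theta$. Your route is salvageable in about the same number of lines once you keep all three cross terms together and use $[v,u']=0$, but as written the final step both asserts something false and proposes a parity argument that cannot prove it.
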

\begin{proof}
Observe that
$$
\left[ M,M'  \right]=  \left[ N-\beta v, \alpha u' \right]= \alpha [N,u']=-\beta'[N,v],\ \ \left[  N,N' \right] =\beta \left[ N, v' \right]
$$
and so
$$
-\left[ M,M'  \right]+\left[  N,N' \right] =[N, (\beta v)']. 
$$
Thus 
$$
SA(M)-SA(N)= -\int_0^{\pi}  \left[ M,M'  \right]d\theta+\int_0^{\pi}\left[  N,N' \right] d\theta=
$$
$$
=-\int_0^{\pi}\beta \left[ N', v \right]d\theta=\int_0^{\pi}\beta^2 \left[ v,v' \right]d\theta.
$$
\end{proof}

\subsection{ Relative position of $M$ and $N$}

We prove now that the area evolute of a convex curve is contained in the region bounded by its
center symmetry set. This is not a surprising result, but we are not aware of any published proof of it.

The exterior of the curve $M$ is defined as the set of points of the plane that can be reached from a point of $\gamma$ by a path that do not cross $M$.
The region $\overline{M}$ bounded by $M$ is the complement of its exterior. It is well known that a point in the exterior of $M$ is the center of exactly one chord of $\gamma$ (\cite{Giblin08}). 

In this section we prove the following result:

\begin{Proposition}\label{prop:NsubsetM}
The involute $N$ is contained in the region bounded by $M$.
\end{Proposition}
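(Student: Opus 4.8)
The plan is to reduce the proposition to a winding number statement about the curve $M$ and then to extract it from the geometry of the diameters of $\eta_d$. Fix $d$ large enough that $\eta_d$ is convex, so that $M=CSS(\eta_d)$ and $N=AE(\eta_d)$. From $N'=\beta v'$ and $\eta_d'=(\beta+d)v'$ one sees that the $v$-normal of $\eta_d$ at $\eta_d(\theta)$ — and equally at $\eta_d(\theta+\pi)$ — is the line $l(\theta)=\{N(\theta)+tv(\theta):t\in\R\}$; on $l(\theta)$ the points $\eta_d(\theta),\eta_d(\theta+\pi)$ sit at $t=\pm d$, the midpoint $N(\theta)$ at $t=0$, and the point of tangency with $M$ (the envelope of the $v$-normals, i.e. the $v$-evolute of $\eta_d$) at $M(\theta)=N(\theta)-\beta(\theta)v(\theta)$, that is, at $t=-\beta(\theta)$. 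The exterior of $M$ is the unbounded component of $\R^2\setminus M$, the one containing $\eta_d$, and on it the winding number of the closed curve $M$ vanishes; hence it suffices to show that this winding number is non-zero at every $N(\theta)$. If $\beta(\theta)=0$ then $N(\theta)=M(\theta)\in M\subset\overline M$, so assume henceforth $\beta(\theta)\ne 0$.

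To compute the winding number of $M$ about $N(\theta)$, I would slide the observation point along $l(\theta)$ from $\eta_d(\theta)$, where it is $0$, to $N(\theta)$: it equals the signed number of transversal crossings of $M$ along the segment $\{N(\theta)+tv(\theta):0\le t\le d\}$. Since $M$ is the envelope of the lines $l(\phi)$, and $l(\phi)=l(\theta)$ as unoriented lines only when $\phi-\theta$ is a multiple of $\pi$, the contact at $M(\theta)$ is a tangency and contributes $0$, while for generic $\theta$ — which is enough by continuity — every other intersection of $l(\theta)$ with $M$ is transversal. Moreover both endpoints $\eta_d(\theta),\eta_d(\theta+\pi)$ of the diameter chord lie in the exterior of $M$, so the net crossing along the whole chord is $0$; thus the crossings on the two sides of the tangency $M(\theta)$ balance, and one may equivalently count along whichever of the half-segments $\{0\le t\le d\}$, $\{-d\le t\le 0\}$ misses $M(\theta)$, which exists because $\beta(\theta)\ne 0$.

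Showing that this signed count is non-zero is the heart of the matter, and the step I expect to be the main obstacle. The cusp structure of $M$ (Proposition~\ref{prop:cuspsM}) is indispensable here: for a convex $M$ the assertion is plainly false. One must use that between consecutive cusps $M$ is a strictly convex arc, so that the tangent line $l(\theta)$, after leaving its contact arc, re-enters $\overline M$, and then prove that the midpoint $N(\theta)$ (the point $t=0$) lies in one of the resulting intervals of $\overline M\cap l(\theta)$, while the contact point $M(\theta)$ ($t=-\beta(\theta)$) lies on the boundary of that configuration. The extra leverage I would bring is the area-difference formula of Proposition~\ref{prop:linear}, applied to the convex family $\eta_d$ with area evolute $N$: it singles out $N(\theta)$ as the unique point of the diameter chord that bisects the region bounded by $\eta_d$ in the direction $v(\theta)$, while placing $M(\theta)$ the signed distance $-\beta(\theta)$ away from it along $l(\theta)$. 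Combined with the classical fact that a point exterior to $CSS(\eta_d)$ lies on exactly one diameter of $\eta_d$, this should rule out that $N(\theta)$ escapes into the exterior of $M$: there the half-segment above would have to carry net crossing $0$, which the convexity of the arcs of $M$ together with the bisection property cannot sustain. Hence the winding number of $M$ about $N(\theta)$ is non-zero, and $N(\theta)\in\overline M$.
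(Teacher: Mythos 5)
Your setup is sound and the winding-number reformulation is a legitimate way to phrase the claim, but the proof is not complete: you yourself flag the decisive step --- that the signed crossing count of $M$ along the half-segment from $\eta_d(\theta)$ to $N(\theta)$ is non-zero --- as ``the heart of the matter,'' and your final paragraph only offers a heuristic for why it ``should'' hold. That is exactly where the real work lies, and the ingredients you list do not yet assemble into an argument. In particular, the bisection property you attribute to $N(\theta)$ is not correct as stated: the diameter of $\eta_d$ at $\theta$ is precisely the line through $N(\theta)$ in direction $v(\theta)$, and by Proposition \ref{prop:linear} applied to the family $\eta_d$ it does \emph{not} bisect the area of $\eta_d$ (the area difference is governed by the next iterate of the curvature function, which is not zero in general). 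What one can actually extract from Proposition \ref{prop:linear} is a statement about $\gamma$, not $\eta_d$: the line through $N(\theta)$ parallel to the diameter $l(\theta)$ of $\gamma$ splits the interior of $\gamma$ into regions of areas $B_1=\frac{A(\gamma)}{2}+\delta$ and $B_2=\frac{A(\gamma)}{2}-\delta$, where $\delta\ge 0$ is an area lying outside $\gamma$. More importantly, converting ``$B_1\ge B_2$'' into ``$N(\theta)\in\overline M$'' is a genuine geometric argument, not a formality: the paper does it by passing to polar coordinates $(r,\phi)$ centered at $C=N(\theta)$, using convexity to compare sector areas near $\phi=\pi$ and $\phi=0$, and deducing that $r(\phi+\pi)=r(\phi)$ for at least three values of $\phi$, i.e.\ that $C$ is the midpoint of at least three chords of $\gamma$; combined with the fact that a point exterior to the area evolute is the midpoint of exactly one chord, this yields the conclusion. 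Nothing in your sketch substitutes for that step.

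A secondary caution: even granting a three-chord or three-diameter count at $N(\theta)$, your route still needs the implication that this forces a non-zero winding number of $M$ about $N(\theta)$. Non-zero winding number does imply membership in $\overline M$ as the paper defines it (the complement of the set of points reachable from $\gamma$ without crossing $M$), so establishing it would suffice; but the converse is not automatic, and the crossing bookkeeping along $l(\theta)$ --- with an odd number of cusps of $M$, tangential contacts, and the two exterior endpoints --- would have to be carried out explicitly rather than asserted. I would recommend dropping the winding number and proving directly, as the paper does, that any point $C=M(\theta)+sv(\theta)$ for which the line through $C$ parallel to $l(\theta)$ satisfies $B_1(C)\ge B_2(C)$ is the midpoint of at least three chords of $\gamma$, and then verifying that $C=N(\theta)$ has this property.
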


The proof is based on two lemmas. For a fixed $\theta$, take $C=M(\theta)+sv(\theta)$, for some $s$, and denote by $l(s)$ the line parallel to $l(\theta)$ through $C$, where $l(\theta)$ is the
diameter line through $\gamma(\theta)$ and $\gamma(\theta+\pi)$. 
Then $l(s)$ divide the interior of $\gamma$ into two regions of areas $B_1=B_1(\theta,C)$ and $B_2=B_2(\theta,C)$.

\begin{lemma}
Take $C=N$, i.e., $s=\beta(\theta)$. Then $B_1(\theta,N)>B_2(\theta,N)$.
\end{lemma}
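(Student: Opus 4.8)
The plan is to reduce the inequality $B_1(\theta,N)>B_2(\theta,N)$ to the area-difference computation already carried out in Proposition \ref{prop:linear}. Fix $\theta$ and work with the chord line $l(s)$ through $C=M(\theta)+sv(\theta)$, parallel to the diameter $l(\theta)$. Since $C$ lies on the $v$-line through $M(\theta)$, and $M(\theta)$ is by definition the midpoint of the diameter $\gamma(\theta)\gamma(\theta+\pi)$, the line $l(0)=l(\theta)$ is precisely the diameter, so $B_1(\theta,M(\theta))-B_2(\theta,M(\theta))$ is (up to the level shift, since $\gamma=\gamma_{c_0}$ for the appropriate $c_0$) the quantity controlled by Proposition \ref{prop:linear}. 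I would instead phrase everything directly in terms of $\gamma$: one shows that $B_1(\theta,C)-B_2(\theta,C)$, as a function of the displacement $s$ along $v(\theta)$, is differentiable, and compute its derivative. Translating the chord by $s\,v(\theta)$ changes each of $B_1,B_2$ by a first-order term given by the lengths of the chords cut off; the key point is that $\tfrac{d}{ds}(B_1-B_2)$ equals (twice) the length of the chord of $\gamma$ along $l(s)$, hence is strictly positive as long as $l(s)$ meets the interior of $\gamma$.

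Next I would pin down the value at $s=\beta(\theta)$ relative to the value at the diameter. From Proposition \ref{prop:linear}, at the diameter itself the area difference $A_1-A_2$ equals $4c_0\beta(\theta)$ where $c_0$ is the level of $\gamma$ in its family of equidistants (i.e. $\gamma=\gamma_{c_0}$, $u(\gamma)=c_0 u$). Combining: $B_1(\theta,N)-B_2(\theta,N)$ equals $4c_0\beta(\theta)$ plus the integral of the positive chord-length derivative over the displacement interval from $0$ to $\beta(\theta)$. The sign bookkeeping here is the delicate part — one must check that moving from $M(\theta)$ toward $N(\theta)$, i.e. in the direction $\mathrm{sgn}(\beta(\theta))\,v(\theta)$, increases $B_1-B_2$ when $\beta(\theta)>0$ and that the base value $4c_0\beta(\theta)$ already has the correct (positive) sign, so that the two contributions reinforce rather than cancel; the case $\beta(\theta)=0$ is a cusp of $N$, where $N(\theta)=M(\theta)$ lies on the diameter and one argues directly (or by continuity from nearby $\theta$). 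In all cases the conclusion is $B_1(\theta,N)-B_2(\theta,N)>0$, i.e. $B_1(\theta,N)>B_2(\theta,N)$.

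The main obstacle I anticipate is making the "derivative of the area difference equals the chord length" step rigorous and keeping the orientation conventions consistent with the paper's parametrization (the sign of $[u,u']$, the direction of $v(\theta)$, which side of $l(s)$ is region $1$). Concretely, one needs: (i) the chord $l(s)\cap\Gamma$ is nonempty and has positive length for $s$ strictly between $0$ and $\beta(\theta)$ — this should follow because $|\beta(\theta)|<|c_0|$ (the AE and CSS sit strictly inside $\gamma$, which one can see from the support-function description or will be a consequence of Barbier's theorem and convexity of $\gamma_c$ for $c\ge\|\alpha\|_\infty$), so $N(\theta)$ lies strictly inside $\Gamma$; and (ii) a clean formula $\tfrac{d}{ds}\bigl(B_1(\theta,C(s))-B_2(\theta,C(s))\bigr)=2\,\ell(s)[u,u'](\theta)$ or similar, where $\ell(s)$ is the (positive) half-length parameter of the chord, obtained by differentiating the area integrals $B_i=\tfrac12\int(\cdots)$ exactly as in the proof of Proposition \ref{prop:linear}. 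Once these are in place the strict inequality is immediate, and the companion lemma (which presumably handles points $C=M(\theta)+sv(\theta)$ with $|s|>|\beta(\theta)|$, or $s$ on the other side) together with a connectedness argument will then yield Proposition \ref{prop:NsubsetM}: the exterior of $M$ consists of centers of a unique chord, the function $s\mapsto B_1-B_2$ vanishes exactly at the genuine midpoint, and $N$ never reaches that configuration except at its cusps, which lie on $M$ itself.
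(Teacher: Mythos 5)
Your overall strategy --- anchor the area difference at the diameter via Proposition \ref{prop:linear} and then track how $B_1-B_2$ changes as the line is translated from $M(\theta)$ to $N(\theta)$ --- is sound, and is essentially a reformulation of the paper's own proof. But the sign bookkeeping, which you yourself flag as the delicate point, comes out wrong, and this inverts the logic. With the paper's labelling, $B_1$ is the area on the same side of the line as the arc $\gamma(s)$, $\theta\le s\le\theta+\pi$; since $\gamma'(\theta)$ is a positive multiple of $v(\theta)$, that arc lies on the $+v(\theta)$ side of the diameter, so translating the line by $+\,ds\,v(\theta)$ \emph{shrinks} $B_1$ and grows $B_2$. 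Hence $\tfrac{d}{ds}(B_1-B_2)=-2\ell(s)$, not $+2\ell(s)$, where $\ell(s)$ is the $u$-length of the chord cut by $l(s)$. The two contributions therefore do not reinforce: one gets $B_1-B_2=4c\beta(\theta)-2\int_0^{\beta(\theta)}\ell(s)\,ds$, and since $\ell(0)=2c$ the base term and the correction nearly cancel. Your claimed conclusion $B_1-B_2=4c\beta+\int 2\ell>4c\beta$ is a false identity; the true value is the typically much smaller quantity $2\delta$ appearing in the paper's proof.

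The ingredient your proposal never invokes is the strict-convexity input that makes the leftover positive: $\Gamma$ is contained between the two parallel tangent lines at $\gamma(\theta)$ and $\gamma(\theta+\pi)$ (both in the direction $v(\theta)$), so $\ell(s)\le 2c$ with equality only at $s=0$. This yields $B_1-B_2=2\int_0^{\beta}\bigl(2c-\ell(s)\bigr)\,ds=2\delta>0$ for $\beta\neq 0$, which is exactly the paper's computation with $\delta$ the area outside $\gamma$ between $l(\theta)$, $l(\beta)$ and the two tangents. Without the bound $\ell(s)\le 2c$, the corrected expression $4c\beta-2\int_0^\beta\ell(s)\,ds$ has no reason to be nonnegative at all, so the proof does not close. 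Two smaller points: at a cusp of $N$ one has $\beta(\theta)=0$, hence $B_1=B_2$ exactly, so the strict inequality degenerates there and continuity from nearby $\theta$ can only give $\ge$ (fortunately all that Lemma \ref{lemma:interiorM} requires); and for $\beta(\theta)<0$ the roles of $B_1$ and $B_2$, or the sign of $\delta$, must be swapped --- an orientation issue that your "the contributions reinforce" picture cannot absorb.
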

\begin{proof}
We have that
$$
B_1(\theta,N)=A_1(\theta)-(2c\beta-\delta),\ \ B_2(\theta,N)=A_2(\theta)+(2c\beta-\delta),
$$
where $\delta$ is the area of the regions outside $\gamma$ and between $l(\beta)$, $l(\theta)$ and the tangents to $\gamma$ at $\theta$ and $\theta+\pi$ (see figure \ref{fig:sym5}). Since, 
by proposition \ref{prop:linear}, $4c\beta=A_1-A_2$ we conclude that
$$
B_1(\theta,N)=\frac{A(\gamma)}{2}+\delta,\ \ B_2(\theta,N)=\frac{A(\gamma)}{2}-\delta, 
$$
which proves the lemma.
\end{proof}

\begin{figure}[htb]
 \centering
 \includegraphics[width=0.70\linewidth]{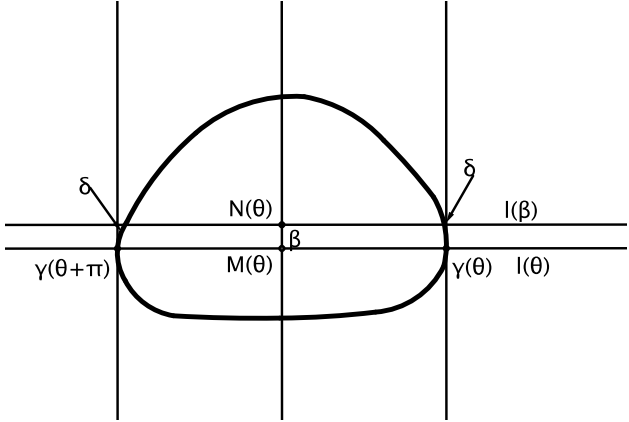}
 \caption{  The line $l(\beta)$ divides the interior of $\gamma$ into two regions of areas $B_1$ and  $B_2$.}
\label{fig:sym5}
\end{figure}

\begin{lemma}\label{lemma:interiorM}
If $B_1(C)\geq B_2(C)$ then $C$ is in the region bounded by $M$.
\end{lemma}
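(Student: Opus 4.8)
The plan is to argue by contradiction via the characterization of the exterior of $M$ recalled just before Proposition \ref{prop:NsubsetM}: a point in the exterior of $M$ is the center of exactly one chord of $\gamma$. So suppose $C$ lies in the exterior of $M$. Then there is a unique chord of $\gamma$ whose midpoint is $C$; let its direction be $\phi$, i.e. $C = \tfrac12(\gamma(\phi)+\gamma(\phi+\pi)) = M(\phi)$. But we also have $C = M(\theta) + s\,v(\theta)$ with $s = B$-data attached to the direction $\theta$; the line $l(s)$ through $C$ is parallel to $l(\theta)$, so I would track how the area-splitting function behaves as the direction of the dividing line rotates.

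The key quantitative step is the following. For a fixed point $C$ in the plane, consider the line through $C$ in direction $e_\psi$ (parallel to $l(\psi)$) and let $B_1(\psi,C) - B_2(\psi,C) = D(\psi,C)$ be the signed area difference of the two pieces into which this line cuts $\Gamma$. First I would record the monotonicity/antisymmetry structure: rotating the dividing line by $\pi$ swaps the two regions, so $D(\psi+\pi,C) = -D(\psi,C)$. Hence if $D(\theta,C)\ge 0$ (the hypothesis $B_1(C)\ge B_2(C)$, reading $B_i(C) = B_i(\theta,C)$), then $D(\theta+\pi,C)\le 0$, and by continuity of $\psi\mapsto D(\psi,C)$ there is some $\psi_0\in[\theta,\theta+\pi]$ with $D(\psi_0,C) = 0$ — that is, the line through $C$ in direction $e_{\psi_0}$ bisects the area of $\Gamma$. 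Now the crucial geometric fact: the unique area-bisecting chord of $\gamma$ in a given direction $e_{\psi_0}$ has its midpoint exactly at $M(\psi_0)$ — this is precisely the defining property of the area evolute, since by \eqref{eq:defineM} the diameter $l(\psi_0)$ joining $\gamma(\psi_0)$ and $\gamma(\psi_0+\pi)$ has midpoint $M(\psi_0)$, and one checks (using $A_1(c,\psi_0)-A_2(c,\psi_0)$ from Proposition \ref{prop:linear} with the relevant normalization, or directly) that this diameter is the one whose parallel through its midpoint bisects $\Gamma$. Thus the area-bisecting line in direction $e_{\psi_0}$ passes through $M(\psi_0)$; but it also passes through $C$. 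If $C\ne M(\psi_0)$, then $C$ lies on the area-bisecting chord itself, so $C$ is the midpoint of a chord of $\gamma$ in some direction other than $\psi_0$ — and letting the direction vary over $[\theta,\theta+\pi]$ produces a whole family of chords through $C$ (or $C$ is itself one of the points $M(\psi_0)$), contradicting that an exterior point is the center of exactly one chord.

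More carefully, the cleanest route is: the map $\psi \mapsto$ (midpoint of the $\psi$-diameter) $= M(\psi)$ is a closed curve, and for $C$ exterior to $M$ the winding number of $M(\psi) - C$ around $0$ is zero, whereas the intermediate-value argument above forces the area-difference function $D(\psi,C)$ to change sign on $[\theta,\theta+\pi]$, which (combined with the identification of zeros of $D(\cdot,C)$ with values $\psi$ for which $C$ lies on the $\psi$-diameter, i.e. $C$ is a center of a $\psi$-chord) is incompatible with $C$ being the center of a unique chord — a unique chord gives exactly one sign change of $D(\cdot,C)$ over a full period $[\theta,\theta+2\pi]$, hence over each half-period $D$ cannot both start nonnegative at $\theta$ and be nonpositive at $\theta+\pi$ without vanishing at an interior point that is not accounted for. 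I expect the main obstacle to be making the last dichotomy airtight: precisely relating the zeros of the area-difference function $D(\cdot,C)$ to the chords of $\gamma$ centered at $C$, and showing that an exterior point's "exactly one chord" property really does preclude the sign pattern forced by $B_1(C)\ge B_2(C)$. I would handle this by noting that $D(\psi,C) = 0$ together with $C$ lying on the line $l(\psi)$ forces $C$ to be the midpoint of that chord (the bisecting chord in a direction is the diameter), so the hypothesis produces at least two directions in $[\theta,\theta+\pi]$ — or a degenerate coincidence $C = M(\psi_0)$ — either of which puts $C$ in $\overline{M}$.
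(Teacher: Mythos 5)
Your strategy collapses at the ``crucial geometric fact'': the claim that the area-bisecting chord of $\Gamma$ in a given direction has its midpoint at $M(\psi_0)$ (equivalently, that the diameter $l(\psi_0)$ bisects the area of $\Gamma$) is false, and in fact the paper itself quantifies exactly how it fails. By Proposition \ref{prop:linear}, the diameter $l(\theta)$ splits $\gamma_{c}$ into pieces whose areas differ by $4c\beta(\theta)$, which is nonzero except at the cusps of $N$; and the first lemma in the proof of Proposition \ref{prop:NsubsetM} shows that even the line parallel to $l(\theta)$ through $N(\theta)=M(\theta)+\beta(\theta)v(\theta)$ still misses bisecting by $2\delta$. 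The area evolute is the locus of midpoints of chords joining points with \emph{parallel tangents}, not of area-bisecting chords; these loci coincide only for centrally symmetric curves. Your parenthetical ``the bisecting chord in a direction is the diameter'' is precisely the statement Proposition \ref{prop:linear} refutes. A second, independent problem is that your intermediate-value step only yields the existence of \emph{some} area-bisecting line through $C$ — but every point in the interior of $\Gamma$ lies on an area-bisecting line (rotate the line through $C$ by $\pi$ and apply IVT, exactly as you do), so this conclusion is vacuous and cannot distinguish points inside $\overline{M}$ from points outside. Consequently the contradiction with ``an exterior point is the center of exactly one chord'' never materializes: you never actually produce a second chord of $\gamma$ with midpoint $C$.

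The paper's proof takes a different, genuinely local route that you would need to adopt or replace: it places polar coordinates $(r,\phi)$ at $C$, uses a convexity estimate on the sectors near the two tangency directions to show that if $C$ were the midpoint of no chord other than possibly one, then $r(\phi+\pi)>r(\phi)$ on an interval would force $B_1(C)<B_2(C)$; the hypothesis $B_1(C)\geq B_2(C)$ therefore forces $r(\phi+\pi)=r(\phi)$ at no fewer than three angles, i.e.\ at least three chords of $\gamma$ centered at $C$, which by the ``exactly one chord'' characterization puts $C$ in $\overline{M}$. The correct invariant to track is thus the number of solutions of $r(\phi+\pi)=r(\phi)$ (chords centered at $C$), not the zeros of the area-difference function of lines through $C$.
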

\begin{proof}
By an affine transformation of the plane, we may assume that $l(\theta)$ and $\gamma'(\theta)$ are orthogonal.
Consider polar coordinates $(r,\phi)$ with center $C$ and describe $\gamma$ by $r(\phi)$. Assume that $\phi=0$ at the line $l(s)$ and that $\phi=-\phi_0$ at $\gamma(\theta)$ (see figure \ref{fig:sym6}).
Denote the area of the sector bounded by $\gamma$ and the rays $\phi_1,\phi_2$ by 
$$
A(\phi_1,\phi_2)=\frac{1}{2}\int_{\phi_1}^{\phi_2}r^2(\phi)d\phi.
$$

Consider a line $q$ parallel to $\gamma'(\theta)$ through the point  $Q_0$ of $\gamma$ corresponding to $\phi=\pi$ and denote by $Q_1$ and $Q_2$ its intersection with the rays $\phi=\pi-\phi_0$
and $\phi=\pi+\phi_0$, respectively (see figure \ref{fig:sym6}). By convexity, we have that 
$$
A(\pi-\phi_0,\pi)<A(CQ_0Q_1)=A(CQ_0Q_2)<A(\pi,\pi+\phi_0).
$$
A similar reasoning shows that 
$ A(0,\phi_0)<A(2\pi-\phi_0,2\pi)$.

Now if $r(\phi+\pi)>r(\phi)$ for any $\phi_0<\phi<\pi-\phi_0$, we would have $B_1(C)<B_2(C)$, contradicting the hypothesis. We conclude that
$r(\phi+\pi)=r(\phi)$ for at least two values of $\phi_0<\phi<\pi-\phi_0$. Since the equality holds also for some $\pi-\phi_0<\phi<\pi+\phi_0$,
there are at least three chords of $\gamma$ having $C$ as midpoint. Thus $C$ is contained in the region bounded by $M$. 
\end{proof}

\begin{figure}[htb]
 \centering
 \includegraphics[width=0.70\linewidth]{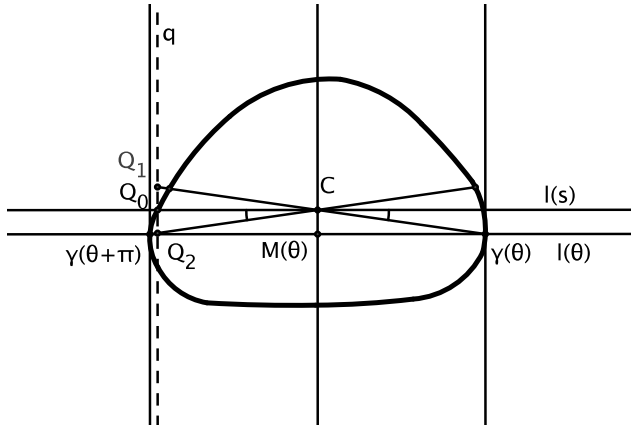}
 \caption{ Proof of lemma \ref{lemma:interiorM}: The line $q$ and the angles corresponding to $\phi=\pi+\phi_0$ and $\phi=2\pi-\phi_0$.}
\label{fig:sym6}
\end{figure}

\section{Iterating involutes}

We denote $M_0=M$ and $N_1=N={\mathcal Inv}(M)$. Let $M_{i}={\mathcal Inv}(N_i)$, $N_{i+1}={\mathcal Inv}(M_{i})$. 
Consider also the sequence
of smooth functions $\alpha_i,\beta_i:[0,2\pi]\to\R$ defined by 
\begin{equation}\label{eq:definealphaibetai}
M_i'=\alpha_i u',\ \ N_i'=\beta_i v' .
\end{equation}
It follows from equation \eqref{eq:definebeta} that
\begin{equation}\label{eq:derivalphaibetai}
\beta_{i+1}'=-\alpha_i [u,u'] , \ \  \alpha_{i}'=\beta_i[v,v'], 
\end{equation}
or equivalently,
\begin{equation}\label{eq:integraalphaibetai}
\beta_{i+1}=\frac{1}{2}\int_{\theta}^{\theta+\pi}\alpha_i[u,u']ds, \ \ \alpha_{i}=-\frac{1}{2}\int_{\theta}^{\theta+\pi}\beta_i[v,v']ds. 
\end{equation}
Also, from \eqref{eq:defineinvoluta}, 
\begin{equation}\label{eq:NiMi}
N_{i+1}=M_i+\beta_{i+1}v,\ \ M_{i}=N_i+\alpha_{i}u.
\end{equation}
From proposition \ref{prop:NsubsetM}
$$
{\overline M}_0\supset{\overline N}_1\supset{\overline M}_1\supset ...
$$
and we denote by $O(\gamma)$ the intersection of all these sets. 

The following theorems are the main results of the paper:

\begin{thm}\label{thm:ConvMiNi}
The set $O(\gamma)$ consists of a unique point and the curves $M_i$ and $N_i$ are converging to $O(\gamma)$ in the $C^{\infty}$ topology.
\end{thm}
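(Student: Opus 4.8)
The plan is to reduce everything to the behavior of the scalar functions $\alpha_i$ and $\beta_i$ under the operations in \eqref{eq:integraalphaibetai}, and to exploit the crucial fact that each operation is, up to a factor $\tfrac12$, a ``half-period averaging'' that kills a lot of Fourier mass. Concretely, both $[u,u']$ and $[v,v']$ are strictly positive $\pi$-antiperiodic... no: they are strictly positive and $2\pi$-periodic, with $[u,u'](\theta)=a(a+a'')(\theta)>0$, and $\alpha_i,\beta_i$ are $\pi$-\emph{anti}periodic (i.e. $\alpha_i(\theta+\pi)=-\alpha_i(\theta)$), which is why the integrals over a half-period in \eqref{eq:integraalphaibetai} are the natural objects. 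First I would set up a weighted $L^2$ framework: let $w_u=[u,u']$, $w_v=[v,v']$, work on the space of $\pi$-antiperiodic smooth functions, and observe that the map $\beta\mapsto\alpha$ given by $\alpha(\theta)=-\tfrac12\int_\theta^{\theta+\pi}\beta\, w_v\,ds$ has the property that $\alpha'=\beta w_v$ (from \eqref{eq:derivalphaibetai}); hence $\int_0^{2\pi}\alpha\alpha'\,d\theta=0$ automatically, and more usefully one integrates by parts to compare $\int \alpha^2$-type quantities with $\int\beta^2$-type quantities. The key inequality I expect to need is a Wirtinger/Poincaré-type estimate: for a $\pi$-antiperiodic function $f$ with $f'=g\,w$ ($w>0$ bounded above and below), one has a bound $\|f\|^2_{L^2}\le \kappa \|g\|^2_{L^2}$ with a contraction constant $\kappa<1$ coming from the fact that $f$ has mean zero on each half-period (antiperiodicity forces the Fourier expansion to contain only odd harmonics $e^{i(2k+1)\theta}$, $k\in\mathbb Z$), so the smallest eigenvalue of the relevant operator is bounded away from the borderline case. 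Iterating \eqref{eq:integraalphaibetai} twice then gives $\|\alpha_{i+1}\|\le \kappa^2\|\alpha_i\|$ for a fixed $\kappa<1$, hence geometric decay of $\|\alpha_i\|_{L^2}$ and $\|\beta_i\|_{L^2}$.

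Next I would bootstrap from $L^2$ decay to $C^\infty$ decay. Since $\alpha_i'=\beta_i w_v$ and $w_v\in C^\infty$ is fixed, an $L^\infty$ or $L^2$ bound on $\beta_i$ immediately bounds $\alpha_i'$ in the same norm; differentiating the relations \eqref{eq:derivalphaibetai} repeatedly expresses every derivative $\alpha_i^{(k)}$, $\beta_i^{(k)}$ as a universal differential-polynomial in $\alpha_{i}$ (resp. $\beta_i$) and the fixed smooth data $u,v$ and their derivatives. Combining with the Sobolev embedding $H^1\hookrightarrow C^0$ on the circle, geometric decay in $L^2$ upgrades to geometric decay of $\alpha_i,\beta_i$ in every $C^k$ norm, with the rate $\kappa$ degrading only by harmless $i$-independent constants at each differentiation level. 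Then \eqref{eq:NiMi} shows $N_{i+1}-M_i=\beta_{i+1}v\to 0$ and $M_i-N_i=\alpha_i u\to 0$ in $C^\infty$, so the nested sequence of curves is Cauchy; since $\overline{M}_0\supset\overline{N}_1\supset\overline{M}_1\supset\cdots$ is a decreasing sequence of nonempty compact sets, its intersection $O(\gamma)$ is nonempty, and the diameter estimate (each $\overline{M}_i$ has diameter $\le \mathrm{const}\cdot\|\alpha_i\|_\infty\to0$, using that $M_i$ is within $\|\alpha_i\|_\infty\cdot\mathrm{diam}(\U)$ of $N_i$ which is within $\|\beta_i\|_\infty\cdot\mathrm{diam}(\V)$ of $M_{i-1}$, together with Proposition \ref{prop:NsubsetM}) forces $O(\gamma)$ to be a single point, and that point is the common limit of all the $M_i$ and $N_i$ in $C^\infty$.

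The main obstacle I anticipate is pinning down the contraction constant $\kappa<1$ rigorously and uniformly — i.e. proving the quantitative Wirtinger inequality for the weighted half-period averaging operator. One has to be careful that the weights $w_u,w_v$ are not constant, so the operator is not simply diagonalized by Fourier modes; the clean way is to note that $\alpha\mapsto\beta_{\text{next}}$ followed by $\beta\mapsto\alpha_{\text{next}}$ is, after the substitutions \eqref{eq:derivalphaibetai}, a compact self-adjoint positive operator on the Hilbert space of $\pi$-antiperiodic functions equipped with a suitable weighted inner product, whose top eigenvalue is $<1$ precisely because the constant function (the would-be eigenfunction with eigenvalue $1$) is not antiperiodic and hence excluded — exactly the mechanism behind Proposition \ref{prop:cuspsM}, which tells us $\alpha_i$ must change sign. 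Making ``the constant is excluded, hence spectral gap'' into an explicit estimate, uniformly over the iteration, is the technical heart of the argument; everything else is bookkeeping with \eqref{eq:derivalphaibetai}–\eqref{eq:NiMi} and standard elliptic bootstrapping on $S^1$.
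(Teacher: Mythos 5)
Your overall architecture (get decay of $\alpha_i,\beta_i$ in a weighted $L^2$ norm, bootstrap to $C^\infty$ via \eqref{eq:derivalphaibetai}, then conclude that the nested compacta shrink to a point) is sound, and the bootstrapping and the endgame are essentially what is needed. But the engine you propose --- a weighted Wirtinger inequality with contraction constant $\kappa<1$ on the space of $\pi$-antiperiodic functions --- is false on exactly that space, and the mechanism you give for the spectral gap is misidentified. The mode excluded by antiperiodicity is the constant function, but the constant is not the critical mode: the critical mode is the \emph{first harmonic}, which \emph{is} $\pi$-antiperiodic. In the Euclidean model ($u=e_r$, $v=e_\theta$, $[u,u']=[v,v']=1$) one computes directly that $\alpha(\theta)=\sin\theta$ gives $\beta(\theta)=\tfrac12\int_\theta^{\theta+\pi}\sin s\,ds=\cos\theta$ and then $-\tfrac12\int_\theta^{\theta+\pi}\cos s\,ds=\sin\theta$ again: the first harmonic is a fixed point of the double iteration, so the best constant in your claimed inequality is $\kappa=1$, not $\kappa<1$. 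What actually excludes this mode is not antiperiodicity but the closedness of the curves $M_i$, i.e.\ the constraint $\int_0^{2\pi}\alpha_i u'\,d\theta=0$ (preserved by the iteration, since each $\alpha_i$ is the curvature radius of a closed curve); you would have to work on that smaller invariant subspace, and even there, with the non-constant weights $[u,u']$ and $[v,v']$, converting ``$1$ is not an eigenvalue'' into a uniform quantitative gap is genuine work that your sketch does not supply --- you correctly flagged this as the technical heart, but the heart as described does not beat.

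The paper sidesteps the spectral question entirely with a monotone-quantity argument. Proposition \ref{prop:signedMN} gives the exact identities $SA(M_i)-SA(N_{i+1})=\int_0^\pi\beta_{i+1}^2[u,u']\,d\theta$ and $SA(N_i)-SA(M_i)=\int_0^\pi\alpha_i^2[v,v']\,d\theta$, with all signed areas nonnegative; telescoping yields $\sum_i\int_0^\pi\alpha_i^2[v,v']\,d\theta+\sum_i\int_0^\pi\beta_{i+1}^2[u,u']\,d\theta\le SA(M_0)$, so these weighted $L^2$ norms are square-summable and in particular tend to $0$ --- no contraction rate is needed, only that the terms of a convergent series vanish. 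Combined with the uniform $C^\infty$ bounds of Lemma \ref{lemma:boundsalphaibetai} (obtained from \eqref{eq:Holder} and \eqref{eq:integraalphaibetai}), this gives $\alpha_i,\beta_i\to0$ in $C^\infty$, hence the theorem. If you want to rescue your route, replace the antiperiodicity argument by the closedness constraint and prove the gap there; otherwise the signed-area telescoping is both shorter and immune to the weight problem.
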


We shall call $O=O(\gamma)$ the central point  of $\gamma$. For fixed $c$ and $d$ construct the sequences of convex curves
$$
\gamma_i=M_i+cu, \ \  \eta_i=N_i+dv.
$$
The curves $\gamma_i$ are of constant $u$-width while the curves $\eta_i$ are of constant $v$-width. 
We can re-state theorem \ref{thm:ConvMiNi} as follows:

\begin{thm}
The sequences of curves $\gamma_i$ and $\eta_i$ are converging in the $C^{\infty}$ topology to $O+cu$ and $O+dv$, respectively.  
\end{thm}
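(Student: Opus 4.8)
The plan is to deduce the second theorem directly from Theorem~\ref{thm:ConvMiNi} together with the defining relations $\gamma_i = M_i + cu$ and $\eta_i = N_i + dv$, with $c$ and $d$ fixed. Since $u$ and $v$ are fixed smooth curves (in particular $C^\infty$ functions of $\theta$ on $[0,2\pi]$, independent of $i$), adding $cu$ or $dv$ is a fixed affine shift in the space of parametrized curves; convergence $M_i \to O$ in the $C^\infty$ topology therefore transfers verbatim to $\gamma_i = M_i + cu \to O + cu$, and likewise $\eta_i = N_i + dv \to O + dv$. Concretely, for each $k \ge 0$ one has $\|\gamma_i^{(k)} - (O+cu)^{(k)}\|_\infty = \|M_i^{(k)} - O^{(k)}\|_\infty$ for $k \ge 1$ (the $cu$ term cancels in all derivatives) and $\|\gamma_i - (O+cu)\|_\infty = \|M_i - O\|_\infty$ for $k=0$, since $O$ is a constant; so the estimates are literally identical to those already obtained in Theorem~\ref{thm:ConvMiNi}.

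The only point that deserves a sentence is the claim that $\gamma_i$ and $\eta_i$ are genuinely convex curves, so that the statement is not vacuous. For this I would invoke the cusp analysis of Section~3.2: the $u$-curvature radius of $\gamma_i = M_i + cu$ is $\alpha_i + c$, and $\gamma_i$ is convex as soon as $c \ge \|\alpha_i\|_\infty$. By the recursion \eqref{eq:integraalphaibetai} and proposition~\ref{prop:signedMN} (or simply the fact, already used in the proof of Theorem~\ref{thm:ConvMiNi}, that $\|\alpha_i\|_\infty$ and $\|\beta_i\|_\infty$ decrease to $0$), there is an index $i_0$ beyond which $\|\alpha_i\|_\infty \le c$ and $\|\beta_i\|_\infty \le d$; hence $\gamma_i$ is convex of constant $u$-width for $i \ge i_0$, and similarly $\eta_i$ is convex of constant $v$-width. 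One remarks that $\gamma_i = M_i + cu$ has area evolute $M_i$ and, by Lemma~\ref{lemma:gammaconstantwidth}, is of constant $u$-width; the dual statement for $\eta_i$ follows from the lemma preceding Example~\ref{ex:example1}, which shows the $\eta_d$ are constant $v$-width curves with $v$-evolute $M_i$. This justifies the descriptive sentences in the theorem.

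There is essentially no obstacle here: the content is entirely in Theorem~\ref{thm:ConvMiNi}, and the second theorem is a reformulation obtained by a fixed translation. The mild care needed is only to state precisely that the $C^\infty$ topology on the relevant space of $2\pi$-periodic parametrized curves is translation-invariant and that adding the fixed smooth curve $cu$ (resp. $dv$) is continuous — both immediate — and then to note the eventual convexity so the phrase ``sequences of convex curves'' is legitimate. I would therefore write the proof in a few lines: recall $\gamma_i - (O+cu) = M_i - O$ and $\eta_i - (O+dv) = N_i - N$ wait, $= N_i - O$, take $C^k$ norms, apply Theorem~\ref{thm:ConvMiNi}, and close with the convexity remark for large $i$.

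\begin{proof}
Fix $c$ and $d$. Since $u$ and $v$ are fixed $C^\infty$ curves, independent of $i$, we have for every $k\ge 1$ and every $i$
$$
\gamma_i^{(k)}(\theta)-\big(O+cu\big)^{(k)}(\theta)=M_i^{(k)}(\theta)-O^{(k)}(\theta)=M_i^{(k)}(\theta),
$$
because $O$ is constant, and for $k=0$,
$$
\gamma_i(\theta)-\big(O+cu(\theta)\big)=M_i(\theta)-O.
$$
Hence $\|\gamma_i-(O+cu)\|_{C^k}=\|M_i-O\|_{C^k}$ for all $k$, and by Theorem~\ref{thm:ConvMiNi} this tends to $0$ as $i\to\infty$; thus $\gamma_i\to O+cu$ in the $C^\infty$ topology. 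The same argument with $N_i$ and $v$ in place of $M_i$ and $u$ gives $\eta_i\to O+dv$ in the $C^\infty$ topology.

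It remains to note that $\gamma_i$ and $\eta_i$ are convex for $i$ large. By \eqref{eq:definealphaibetai} the $u$-curvature radius of $\gamma_i=M_i+cu$ is $\alpha_i+c$, and by Section~\ref{sec:signedareas} (as used in the proof of Theorem~\ref{thm:ConvMiNi}) $\|\alpha_i\|_\infty$ and $\|\beta_i\|_\infty$ decrease to $0$. Choose $i_0$ with $\|\alpha_i\|_\infty\le c$ and $\|\beta_i\|_\infty\le d$ for $i\ge i_0$; then $\gamma_i$ is convex of constant $u$-width and $\eta_i$ is convex of constant $v$-width for all $i\ge i_0$, by Lemma~\ref{lemma:gammaconstantwidth} and the lemma preceding Example~\ref{ex:example1}, with area evolutes $M_i$ and $N_i$ respectively.
\end{proof}
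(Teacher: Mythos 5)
Your proposal is correct and follows exactly the route the paper intends: the paper presents this theorem as a restatement of Theorem~\ref{thm:ConvMiNi}, and you simply make explicit that adding the fixed smooth curves $cu$ and $dv$ is a $C^\infty$-continuous translation, so the $C^k$ norms of $\gamma_i-(O+cu)$ and $\eta_i-(O+dv)$ coincide with those of $M_i-O$ and $N_i-O$. Your added remark on eventual convexity (via $\|\alpha_i\|_\infty,\|\beta_i\|_\infty\to 0$ from Proposition~\ref{Prop:ConvAlphaBeta}) is a harmless and reasonable supplement justifying the phrase ``convex curves,'' though ``decrease to $0$'' should read ``converge to $0$'' since monotonicity of the sup norms is not established.
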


\bigskip

\begin{example}\label{ex:example2}
In example \ref{ex:example1}, we obtain from \eqref{eq:derivalphaibetai} that $\alpha_1(\theta)=-\frac{8}{9}\sin(3\theta)$. Then equation \eqref{eq:NiMi} implies that
$$
M_1(\theta)=N(\theta)+\alpha_1(\theta)e_r=\frac{1}{9}M(\theta).
$$
In fact, it is not difficult to verify that $M_{i+1}=\frac{1}{9}M_i$ and $N_{i+1}=\frac{1}{9}N_i$, for any $i\geq 0$. Thus the sequences $(M_i)$ and $(N_i)$ are both converging 
to $0$ in the $C^{\infty}$ topology (see figure \ref{fig:deltoide2}). 

 \begin{figure}[htb]
 \centering
 \includegraphics[width=0.40\linewidth]{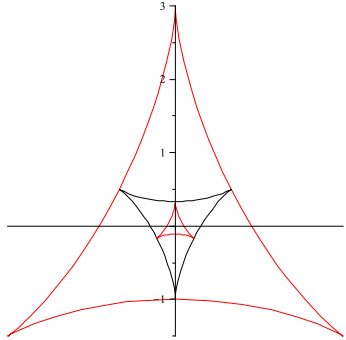}
 \caption{ The curves $M$, $N$ and $M_1$ of example \ref{ex:example2}. }
\label{fig:deltoide2}
\end{figure}
\end{example}

\bigskip\noindent
We shall now prove theorem \ref{thm:ConvMiNi}. Denote the signed areas of $M_i$ and $N_i$ by 
$$
SA(M_i)=-\int_0^{\pi}[M_i,M_i']d\theta, \ \ SA(N_i)=-\int_0^{\pi}[N_i,N_i']d\theta.
$$
By section \ref{sec:signedareas}, $SA(M_i)\geq 0$, $SA(N_i)\geq 0$ and proposition \ref{prop:signedMN} implies that
$$
SA(M_i)-SA(N_{i+1})=\int_0^{\pi} \beta_{i+1}^2[u,u']d\theta, \ \  SA(N_i)-SA(M_{i})=\int_0^{\pi} \alpha_{i}^2[v,v']d\theta.
$$
Thus 
\begin{equation}\label{eq:sumsquares}
\sum_{i=0}^{\infty} \int_0^{\pi} \beta_{i+1}^2[u,u']d\theta +\sum_{i=0}^{\infty} \int_0^{\pi} \alpha_{i}^2[v,v']d\theta \leq SA(M_0).
\end{equation}

We shall use below the following well-known inequality: For any continuous function $g:[0,\pi]\to\R$, 
\begin{equation}\label{eq:Holder}
\frac{1}{A(u)}\int_0^{\pi}|g|[u,u']d\theta\leq \left( \frac{1}{A(u)}\int_0^{\pi}g^2[u,u']d\theta      \right)^{\frac{1}{2}}.
\end{equation}

\begin{lemma}\label{lemma:boundsalphaibetai}
The functions $\alpha_i$ and $\beta_i$ are uniformly bounded in the $C^{\infty}$-topology. 
\end{lemma}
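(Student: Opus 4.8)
The strategy is to bound the $C^{0}$ norms of $\alpha_{i}$ and $\beta_{i}$ using the square–summability estimate \eqref{eq:sumsquares}, and then to propagate the bound to all higher derivatives by induction, exploiting that differentiating the recursion \eqref{eq:derivalphaibetai} involves only multiplication by the fixed smooth functions $[u,u']$, $[v,v']$ and their derivatives — no integration.

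For the $C^{0}$ bound: the functions $[u,u']$ and $[v,v']$ are $\pi$-periodic, smooth and bounded between two positive constants, so all the weighted $L^{2}$ norms appearing below are mutually comparable. From \eqref{eq:sumsquares} every term of the two series is at most $SA(M_{0})$, hence $\int_{0}^{\pi}\alpha_{i}^{2}[v,v']\,d\theta$ and $\int_{0}^{\pi}\beta_{i}^{2}[u,u']\,d\theta$ are bounded uniformly in $i$, and therefore so are $\int_{0}^{\pi}\alpha_{i}^{2}[u,u']\,d\theta$ and $\int_{0}^{\pi}\beta_{i}^{2}[v,v']\,d\theta$. Since the integrand in \eqref{eq:integraalphaibetai} is $\pi$-periodic, we may replace $\int_{\theta}^{\theta+\pi}$ by $\int_{0}^{\pi}$, and Cauchy--Schwarz in the form \eqref{eq:Holder} (and its analogue with $v$ in place of $u$) gives
$$|\beta_{i+1}(\theta)|\le\frac{1}{2}\int_{0}^{\pi}|\alpha_{i}|[u,u']\,d\theta\le\frac{1}{2}\,A(u)^{1/2}\Big(\int_{0}^{\pi}\alpha_{i}^{2}[u,u']\,d\theta\Big)^{1/2},$$
and likewise $|\alpha_{i}(\theta)|\le\frac{1}{2}\,A(v)^{1/2}(\int_{0}^{\pi}\beta_{i}^{2}[v,v']\,d\theta)^{1/2}$ for $i\ge 1$; the single remaining function $\alpha_{0}$, being the $u$-curvature of the fixed curve $M_{0}$, is smooth and needs no argument. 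This yields a constant $C_{0}$ with $\|\alpha_{i}\|_{C^{0}},\|\beta_{i}\|_{C^{0}}\le C_{0}$ for all $i$.

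Next I would show by induction on $k$ that for each $k\ge 0$ there is a constant $C_{k}$ with $\|\alpha_{i}^{(k)}\|_{C^{0}}\le C_{k}$ and $\|\beta_{i}^{(k)}\|_{C^{0}}\le C_{k}$ for all $i$; the case $k=0$ has just been done. For the induction step, differentiate \eqref{eq:derivalphaibetai} $k$ times and apply the Leibniz rule:
$$\beta_{i+1}^{(k+1)}=-\sum_{j=0}^{k}\binom{k}{j}\alpha_{i}^{(j)}[u,u']^{(k-j)},\qquad \alpha_{i}^{(k+1)}=\sum_{j=0}^{k}\binom{k}{j}\beta_{i}^{(j)}[v,v']^{(k-j)}.$$
Each summand is a fixed bounded function times a derivative of $\alpha_{i}$ or $\beta_{i}$ of order at most $k$, hence is bounded by a constant depending only on $k$ via the induction hypothesis; absorbing also the fixed finite quantity $\|\alpha_{0}^{(k+1)}\|_{C^{0}}$ produces $C_{k+1}$. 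This closes the induction and proves the lemma.

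The one genuine point is the $C^{0}$ step. Estimating $\beta_{i}$ and $\alpha_{i}$ directly from the integral recursion \eqref{eq:integraalphaibetai} alone would only give a bound growing like a geometric progression, with ratio of order $(\pi/2)^{2}\|[u,u']\|_{\infty}\|[v,v']\|_{\infty}$, which need not be less than $1$. It is precisely the monotone decay of the signed areas, encoded in \eqref{eq:sumsquares}, that forces the $L^{2}$ norms to stay bounded (indeed to tend to $0$); once $C^{0}$ control is available, the higher derivatives come for free because the recursion for derivatives no longer contains an integral operator.
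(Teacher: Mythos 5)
Your proof is correct and follows essentially the same route as the paper: the uniform $L^2$ bound extracted from the telescoping signed-area sum \eqref{eq:sumsquares}, Cauchy--Schwarz via \eqref{eq:Holder}, and the integral recursion \eqref{eq:integraalphaibetai} give the $C^0$ bound exactly as in the paper's argument. The only differences are that you spell out the Leibniz-rule induction for the higher derivatives, which the paper leaves implicit, and your observation that the weights $[u,u']$ and $[v,v']$ are mutually comparable quietly repairs a harmless notational slip between \eqref{eq:sumsquares} and the proof in the paper.
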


\begin{proof}
By equation \eqref{eq:sumsquares}, 
$$
\int_0^{\pi}\alpha_i^2 [u,u']d\theta\leq SA(M_0).
$$
Now inequality \eqref{eq:Holder} implies that 
$$
\int_0^{\pi}|\alpha_i| [u,u']d\theta\leq  \sqrt{A(u)SA(M_0)}.
$$
We conclude from equation \eqref{eq:integraalphaibetai} that $\beta_{i}$ is uniformly bounded. Similarly $\alpha_i$ uniformly bounded. 
\end{proof}

\begin{Proposition}\label{Prop:ConvAlphaBeta}
The functions $\alpha_i$ and $\beta_i$ are converging to $0$ in the $C^{\infty}$ topology.
\end{Proposition}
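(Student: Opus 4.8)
The plan is to bootstrap from the summability statement \eqref{eq:sumsquares} together with the recursions \eqref{eq:derivalphaibetai}, \eqref{eq:integraalphaibetai}. First, \eqref{eq:sumsquares} gives that $\int_0^\pi \alpha_i^2[v,v']\,d\theta\to 0$ and $\int_0^\pi\beta_{i+1}^2[u,u']\,d\theta\to 0$ as $i\to\infty$, since the series converge. Applying the H\"older-type inequality \eqref{eq:Holder} (and its $v$-analogue), this yields $\int_0^\pi|\alpha_i|[v,v']\,d\theta\to 0$ and $\int_0^\pi|\beta_i|[u,u']\,d\theta\to 0$. Now the integral formulas \eqref{eq:integraalphaibetai} express $\beta_{i+1}$ pointwise as a bounded multiple of an $L^1$-norm of $\alpha_i$ (the weights $[u,u']$, $[v,v']$ are smooth and positive, hence bounded above and below), so these $L^1$ convergences immediately upgrade to $\|\beta_{i+1}\|_\infty\to 0$ and $\|\alpha_i\|_\infty\to 0$. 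This establishes $C^0$ convergence of both sequences to $0$.

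The next step is to promote $C^0$ convergence to $C^\infty$ convergence. The clean way is to differentiate the recursion. From \eqref{eq:derivalphaibetai} we have $\beta_{i+1}' = -\alpha_i[u,u']$ and $\alpha_i' = \beta_i[v,v']$; differentiating repeatedly, the $k$-th derivative of $\beta_{i+1}$ is a universal differential-operator expression (with smooth coefficients built from $u$, $v$) applied to $\beta_i$ — more precisely, $\beta_{i+1}^{(k)}$ is a linear combination, with fixed smooth coefficients, of $\beta_i^{(k-1)}, \beta_i^{(k-2)}, \dots$, hence of derivatives of $\beta_i$ of order \emph{strictly less} than $k$. Likewise for $\alpha$. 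So each differentiation against the index $i$ lowers the order of derivative one needs to control. Combined with the uniform $C^\infty$ bounds of Lemma \ref{lemma:boundsalphaibetai}, one argues by induction on $k$: supposing $\beta_i^{(j)}\to 0$ and $\alpha_i^{(j)}\to 0$ uniformly for all $j<k$ (the base case $k=1$ following from the $C^0$ convergence just proved plugged into \eqref{eq:derivalphaibetai}), the formula for $\beta_{i+1}^{(k)}$ and $\alpha_{i+1}^{(k)}$ as a smooth-coefficient combination of lower-order derivatives of $\alpha_i$, $\beta_i$ forces $\beta_{i+1}^{(k)}\to 0$ and $\alpha_{i+1}^{(k)}\to 0$. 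This closes the induction and gives convergence to $0$ in every $C^k$, i.e.\ in the $C^\infty$ topology.

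The main obstacle I anticipate is purely bookkeeping: making the "derivatives-of-$\beta_{i+1}$ depend only on lower-order derivatives of $\beta_i$" claim precise, i.e.\ writing down (or at least clearly describing) the differential recursion and checking that no term of order $\geq k$ in $\beta_i$ appears on the right side of the expression for $\beta_{i+1}^{(k)}$. This follows by an easy induction from $\beta_{i+1}' = -\alpha_i[u,u']$ and $\alpha_i' = \beta_i[v,v']$ — one differentiation of $\beta_{i+1}$ costs one derivative of $\alpha_i$, one more costs one derivative of $\beta_i$, etc., so reaching order $k$ in $\beta_{i+1}$ only ever involves $\beta_i$ up to order $k-2$ (and $\alpha_i$ up to order $k-1$, which in turn involves $\beta_{i-1}$ up to order $k-2$). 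One should also note that the weights $[u,u']=(a+a'')a>0$ and $[v,v']$ are smooth, strictly positive and $2\pi$-periodic, so all coefficient functions appearing are uniformly bounded with all derivatives. With these remarks in place the proof is a short induction, and no genuinely hard estimate beyond \eqref{eq:sumsquares} and \eqref{eq:Holder} is needed.
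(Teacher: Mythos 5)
Your proof is correct, and it reaches the conclusion by a genuinely different mechanism than the paper in the step that upgrades integral decay to $C^{\infty}$ decay. Both arguments start from the same key input, the summability estimate \eqref{eq:sumsquares}, which forces $\int_0^{\pi}\alpha_i^2\,[v,v']\,d\theta\to 0$ and $\int_0^{\pi}\beta_i^2\,[u,u']\,d\theta\to 0$. The paper then invokes equicontinuity of $(\alpha_i)$ and $(\beta_i)$ (coming from the uniform $C^{\infty}$ bounds of Lemma \ref{lemma:boundsalphaibetai}) to pass from $L^2$ decay to uniform decay, and leaves implicit the standard compactness/interpolation argument that uniformly $C^{\infty}$-bounded sequences converging to $0$ in $C^0$ converge to $0$ in every $C^k$. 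You instead exploit the explicit structure of the recursion: \eqref{eq:Holder} turns the $L^2$ decay into $L^1$ decay, the integral formulas \eqref{eq:integraalphaibetai} turn $L^1$ decay of $\alpha_i$ (resp.\ $\beta_i$) into sup-norm decay of $\beta_{i+1}$ (resp.\ $\alpha_i$) --- here one uses that $\alpha_i(\theta+\pi)=-\alpha_i(\theta)$ and $[u,u']=a(a+a'')$ is $\pi$-periodic and bounded, so the integral over $[\theta,\theta+\pi]$ is controlled by the $L^1$ norm over $[0,\pi]$ --- and then the Leibniz expansion of \eqref{eq:derivalphaibetai} expresses $\beta_{i+1}^{(k)}$ and $\alpha_i^{(k)}$ as fixed smooth-coefficient combinations of derivatives of $\alpha_i$, $\beta_i$ of order at most $k-1$, so a joint induction on $k$ closes. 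Your route buys two things: it avoids Arzel\`a--Ascoli and interpolation entirely, and (as you half-notice) it does not actually need Lemma \ref{lemma:boundsalphaibetai} at all --- the citation of that lemma in your second paragraph is harmless but redundant, since the induction supplies all the control it needs from the $C^0$ base case. The price is the Leibniz bookkeeping you describe, which is routine. The only point worth stating explicitly if you write this up is the $\pi$-periodicity of $|\alpha_i|[u,u']$ and $|\beta_i|[v,v']$ used to bound the moving-window integrals in \eqref{eq:integraalphaibetai} by the fixed $L^1$ norms on $[0,\pi]$.
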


\begin{proof}
From lemma \ref{lemma:boundsalphaibetai}, the families $(\alpha_i)$ and $(\beta_i)$ are equicontinuous. By equation \eqref{eq:sumsquares}, 
$$
\lim \int_0^{\pi}\alpha_{i}^2[u,u']d\theta=0.
$$
This implies that $\lim\alpha_i=\lim\beta_i=0$ in the $C^{\infty}$-topology.
\end{proof}

By the above lemmas, the diameter of $M_i$ and $N_i$ are converging to zero and so $O=O(\gamma)$ is in fact a set consisting of a unique point. Theorem \ref{thm:ConvMiNi} is now an easy consequence
of proposition \ref{Prop:ConvAlphaBeta}.

\end{document}